\newcommand{\rk}{{\rm rk}}
\newtheorem{lemma1}{}[section]
\newenvironment{lemma}{\begin{lemma1}{\bf Lemma.}}{\end{lemma1}}
\newenvironment{example}{\begin{lemma1}{\bf Example.}\rm}{\end{lemma1}}
\newenvironment{theorem}{\begin{lemma1}{\bf Theorem.}}{\end{lemma1}}
\newenvironment{proposition}{\begin{lemma1}{\bf Proposition.}}{\end{lemma1}}
\newenvironment{corollary}{\begin{lemma1}{\bf Corollary.}}{\end{lemma1}}
\newenvironment{remark}{\begin{lemma1}{\bf Remark.}\rm}{\end{lemma1}}
\newenvironment{remarks}{\begin{lemma1}{\bf Remarks.}\rm}{\end{lemma1}}
\newenvironment{definition}{\begin{lemma1}{\bf Definition.}}{\end{lemma1}}
\newenvironment{conjecture}{\begin {lemma1}{\bf Conjecture.}}{\end{lemma1}}
\newenvironment{remark*}{{\bf Remark.}}{}
\newenvironment{example*}{{\bf Example.}}{}
\newcommand{\R}{\ensuremath{\mathbb{R}}}
\newcommand{\Q}{\ensuremath{\mathbb{Q}}}
\newcommand{\Z}{\ensuremath{\mathbb{Z}}}
\newcommand{\C}{\ensuremath{\mathbb{C}}}
\newcommand{\N}{\ensuremath{\mathbb{N}}}
\newcommand{\PP}{\ensuremath{\mathbb{P}}}
\newcommand{\merom}[3]{\ensuremath{#1:#2 \dashrightarrow #3}}
\newcommand{\holom}[3]{\ensuremath{#1:#2  \rightarrow #3}}
\newcommand{\fibre}[2]{\ensuremath{#1^{-1} (#2)}}
\newcommand\sE{{\mathcal E}}
\newcommand\sF{{\mathcal F}}
\newcommand\sG{{\mathcal G}}
\newcommand\sO{{\mathcal O}}
\DeclareMathOperator*{\pic}{Pic}
\DeclareMathOperator*{\Pic0}{Pic^0}
\DeclareMathOperator*{\nons}{nons}
\newcommand{\chow}[1]{\ensuremath{\mathcal{C}(#1)}}
\title{On a conjecture of Beltrametti and Sommese} 
\date{November 30, 2017}
\author{Andreas H\"oring}
\subjclass[2000]{14C20, 14N30, 14C40, 14E30, 14J40, 14C17}
\keywords{adjoint divisor, MMP, effective non-vanishing, cotangent sheaf, uniruled varieties, Chern classes}
\address{Andreas H\"oring, Universit\'e Pierre et Marie Curie, Institut de Math\'ematiques de Jussieu, Equipe de Topologie et G\'eom\'etrie Alg\'ebrique, 175, rue du Chevaleret, 75013 Paris, France}
\email{hoering@math.jussieu.fr}
\begin{document}

\begin{abstract} 
Let $X$ be a projective manifold of dimension $n$.
Beltrametti and Sommese conjectured that if $A$ is an ample
divisor such that $K_X+(n-1)A$ is nef, then $K_X+(n-1)A$ has non-zero global sections.
We prove a weak version of this conjecture in arbitrary dimension.
In dimension three, we prove the stronger non-vanishing conjecture
of Ambro, Ionescu and Kawamata and give an application to Seshadri constants.
\end{abstract}

\maketitle


\section{Introduction}

\subsection{The main result}

The aim of this paper is to study the following effective non-vanishing conjecture, due to 
Beltrametti and Sommese \cite[Conj. 7.2.7]{BS95}. 

\begin{conjecture} \label{conjectureBS} 
Let $X$ be a projective manifold of dimension $n$, 
and let $A$ be an ample Cartier divisor such that $K_X+(n-1)A$ is nef.
Then  we have
$$
H^0(X, \sO_X(K_X+(n-1)A)) \neq 0.
$$
\end{conjecture}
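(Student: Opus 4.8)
\emph{Reduction to an Euler characteristic.} The plan is first to convert the statement into a Riemann--Roch inequality. Set $L := K_X + (n-1)A$. Since $L - K_X = (n-1)A$ is ample for $n \ge 2$ (the low-dimensional cases $n \le 2$ being classical), Kawamata--Viehweg vanishing gives $H^i(X, \sO_X(L)) = 0$ for all $i > 0$, so
\[
h^0(X, \sO_X(K_X + (n-1)A)) = \chi(X, \sO_X(K_X + (n-1)A)),
\]
and it suffices to prove $\chi(X, \sO_X(K_X + (n-1)A)) > 0$. Moreover $2L - K_X = L + (n-1)A$ is the sum of a nef and an ample divisor, hence ample, so by the Kawamata--Shokurov base-point-free theorem $L$ is semiample; let $\varphi\colon X \to Y$ be the associated morphism with connected fibres onto a normal projective variety $Y$, and split into the cases $\dim Y = n$ (equivalently, $L$ is big) and $\dim Y < n$.

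\emph{The big case.} When $\dim Y = n$ the divisor $L$ is nef and big, so the top-degree Riemann--Roch term $(K_X + (n-1)A)^n/n!$ of $\chi(X, \sO_X(K_X+(n-1)A))$ is strictly positive, and the task is to bound the lower-degree terms from below. Here I would exploit the rational curves on $X$: if $X$ carries a covering family of rational curves of $A$-degree one, then --- since such curves have anticanonical degree at most $n+1$ while $K_X + (n-1)A$ is nef --- their deformation theory forces $X$ and $A$ into a very restricted shape, essentially a projective bundle with $A$ the relative hyperplane bundle, and the inequality is then checked directly; if no such family exists, the rational curves on $X$ have larger $A$-degree, or $X$ is not uniruled and $\Omega_X^1$ is generically nef by Miyaoka's theorem, and in either case the Chern-class contributions to $\chi$ carry the sign needed to keep $\chi(X, \sO_X(K_X+(n-1)A)) > 0$.

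\emph{The non-big case.} When $\dim Y < n$, a general fibre $F$ of $\varphi$ has positive dimension with $L|_F \equiv 0$, hence $-K_F \equiv (n-1)(A|_F)$ with $\dim F \le n-1$ and $A|_F$ ample; by the classification of Fano pairs of large index relative to their dimension (Kobayashi--Ochiai, Fujita), $(F, A|_F)$ lies on a short explicit list, and in particular $\pic(F)$ is torsion-free, so $L$ descends to a nef Cartier divisor $L_Y$ on $Y$ with $L = \varphi^* L_Y$. Then $\chi(X, \sO_X(L)) = \chi(Y, \sO_Y(L_Y))$ since $R\varphi_*\sO_X = \sO_Y$ (the fibres being Fano, hence rationally connected), and via the canonical bundle formula of Fujita--Kawamata for $\varphi$ one writes $L_Y \sim_{\bQ} K_Y + \Delta_Y + M_Y$ with $(Y, \Delta_Y)$ klt and $M_Y$ nef, reducing the problem to a lower-dimensional instance of the same type of non-vanishing; an induction on $\dim X$ then closes the loop. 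In dimension three the base $Y$ is a curve or a surface, both the Fano fibration and the base being completely understood, and this yields the stronger Ambro--Ionescu--Kawamata non-vanishing, hence Conjecture~\ref{conjectureBS}, without restriction.

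\emph{The main obstacle.} The delicate point throughout is the numerically trivial borderline: a reduction step may leave on the base a line bundle that is nef but neither ample nor manifestly effective --- a degree-zero line bundle on a curve, say --- or at the extreme one faces the situation $K_X + (n-1)A \equiv 0$ on all of $X$, where $X$ is of Fano type with a one-dimensional constraint on $A$-degrees. Numerical triviality does not imply non-vanishing, and to conclude one needs genuinely extra input: abundance, the structure of the Albanese map, or the precise classification and monodromy of the Fano fibres. This is available in dimension at most three but not in general, which is exactly why in arbitrary dimension only a weakened form of the conclusion can be reached, while dimension three gives the optimal statement.
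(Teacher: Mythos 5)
The statement you are proving is Conjecture~\ref{conjectureBS}; it is stated in the paper as an open conjecture, and the paper itself proves only a weak form of it (Theorem~\ref{theoremBS}: non-vanishing of $H^0(X,\sO_X(K_X+jA))$ for \emph{some} $j\in\{1,\dots,n-1\}$, rather than for $j=n-1$) together with the full statement in dimension three. So there is no proof in the paper to match yours against, and your proposal, read as a proof of the conjecture in arbitrary dimension, has two genuine gaps sitting exactly where the difficulty lies. In the big case, the assertion that ``the Chern-class contributions to $\chi$ carry the sign needed to keep $\chi(X,\sO_X(K_X+(n-1)A))>0$'' is not an argument but a restatement of the conjecture. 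What generic nefness of $\Omega_X\hspace{-0.8ex}<\hspace{-0.8ex}A\hspace{-0.8ex}>$ and Miyaoka-type inequalities actually give is one inequality on one specific combination of $c_2(X)$, $K_X\cdot A^{n-1}$ and $A^n$; this is enough to contradict the \emph{simultaneous} vanishing of $h^0(K_X+jA)$ for all $j=1,\dots,n-1$ (because then $1,\dots,n-1$ are all roots of the degree-$n$ Hilbert polynomial, which pins down its coefficients via Lemma~\ref{lemmaminusone}), but it says nothing about the single value $t=n-1$. Moreover $\chi(X,\sO_X)$, which enters the constant term, is uncontrolled when $X$ is uniruled but not rationally connected; handling that case occupies most of Section~\ref{sectionBS} and still only yields the weak conclusion.

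In the non-big case the induction does not close: after descending along the semiample fibration and applying the canonical bundle formula you land on an effective non-vanishing statement for a klt pair $(Y,\Delta_Y)$ with a nef moduli part $M_Y$ and a divisor that is no longer of the adjoint form $K_Y+(\dim Y-1)(\text{ample Cartier})$. That is the Ambro--Kawamata effective non-vanishing conjecture, which is strictly more general than the statement you started from and is itself open (already in dimension three the paper must assume $A$ Cartier, and the Iano-Fletcher example shows the $\Q$-Cartier version fails). An induction on dimension that reduces a conjecture to a harder conjecture is not a proof. Your closing paragraph in fact concedes both points; as written, the proposal establishes at best the weakened statement of Theorem~\ref{theoremBS} and the three-dimensional case of Theorem~\ref{theoremIK}, not Conjecture~\ref{conjectureBS}.
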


By the classification of Fujita and Ionescu \cite{Ion86, Fuj87} the adjoint divisor $K_X+(n-1)A$ is nef 
unless we are in a very special situation ($X$ is a projective space, quadric etc.),
so the conjecture applies to adjoint linear systems on almost every variety.  
If $X$ is a surface it is an immediate consequence
of the Riemann-Roch formula and classical results on surfaces, but in higher dimension the situation is much
more complicated.  
Conjecture \ref{conjectureBS} and its (conjectural) generalisation due to Ambro \cite{Amb99}, Ionescu \cite{Cet90} and Kawamata \cite{Kaw00}
have been studied by several authors during the last years 
\cite{Kaw00}, \cite{CCZ05}, \cite{Xie05}, \cite{Fu06}, \cite{Fu07}, \cite{Bro09}, \cite{a6}.
We prove a weak version of the Beltrametti-Sommese conjecture in arbitrary dimension:

\begin{theorem} \label{theoremBS}
Let $X$ be a normal, projective variety of dimension $n \geq 2$ with at most rational singularities, and let
$A$ be a nef and big Cartier divisor on $X$ such that 
$K_X+(n-1)A$ is generically nef (cf. Definition \ref{definitiongenericallynef}). 
Then there exists a $j \in \{1, \ldots, n-1\}$ such that 
\[
H^0(X, \sO_X(K_X+jA)) \neq 0.
\]
In particular if $A$ is effective, then 
\[
H^0(X, \sO_X(K_X+(n-1)A)) \neq 0.
\]
If $X$ has irrational singularities, the statement still holds unless $(X,A)$ is birationally a scroll (cf. Definition \ref{definitionbirationalscroll}) over a curve
of positive genus.
\end{theorem}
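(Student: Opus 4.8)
The plan is to argue by contradiction: assume $H^0(X,\sO_X(K_X+jA))=0$ for every $j\in\{1,\dots,n-1\}$ and show that $(X,A)$ must then be birationally a scroll over a curve of genus $\geq 1$ --- which, when $X$ has rational singularities, is incompatible with $K_X+(n-1)A$ being generically nef. First I would reduce to the case where $A$ is very ample (a genuine technical point, since passing to a multiple of $A$ changes the assertion), and let $C$ be the smooth curve cut out by $n-1$ general members of $|A|$, so that $C$ lies in the smooth locus and $\omega_C=\big(\omega_X\otimes\sO_X((n-1)A)\big)|_C$ by adjunction. Resolving the Koszul complex of $\sO_C\subset\sO_X$ and twisting by $\omega_X\otimes\sO_X((n-1)A)$ produces a complex whose terms are sums of $\omega_X\otimes\sO_X(kA)$ with $1\leq k\leq n-1$ in positive degrees and $\omega_X$ in the bottom degree. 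Kawamata--Viehweg vanishing --- applied on a resolution and descended to $X$ via Grauert--Riemenschneider, using the rationality of the singularities --- kills $H^{>0}$ of every twist $\omega_X\otimes\sO_X(kA)$ with $k\geq1$, and the standing assumption kills $H^0$ of every such twist with $k\leq n-1$; hence the hypercohomology spectral sequence of the complex collapses onto the single column coming from $\omega_X$. Comparing with $H^\bullet(C,\omega_C)$ and invoking Serre duality on $X$ yields $H^j(X,\sO_X)=0$ for all $j\geq2$ together with
\[
g(C)=h^1(X,\sO_X)=:q(X).
\]

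Next, since $K_X+(n-1)A$ is generically nef, $\deg\omega_C=(K_X+(n-1)A)\cdot C\geq0$, so $g(C)\geq1$ and hence $q(X)\geq1$. I would then exploit that we are in the extremal situation where the sectional genus equals the irregularity (recall $g(C)\geq q(X)$ always, by the same vanishing applied to $\sO_X(-A)$ and its successive restrictions). Passing to a resolution $Y\to X$, the vanishing $h^j(\sO)=0$ for $j\geq2$ forces $H^0(Y,\Omega^2_Y)=0$, so every wedge of holomorphic $1$-forms on $Y$ vanishes; when $q(X)\geq2$ this lets Castelnuovo--de Franchis produce a fibration $f\colon Y\to B$ onto a smooth curve with $g(B)=q(X)$ and $f^*H^0(\Omega^1_B)=H^0(\Omega^1_Y)$, while when $q(X)=1$ one uses Fujita's classification of polarised varieties of sectional genus $1$ (a Del Pezzo variety would be Fano, hence have $q=0$, so the only remaining possibility is a scroll over an elliptic curve). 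In the case $q(X)\geq2$, a general complete intersection curve $C$ maps to $B$, and Hurwitz together with $g(C)=g(B)$ forces the map $C\to B$ to be an isomorphism, so $C\cdot F=1$ for a general fibre $F$; then $(A|_F)^{n-1}=C\cdot F=1$, so $(F,A|_F)$ has minimal degree and $F\cong\PP^{n-1}$, $A|_F=\sO_{\PP^{n-1}}(1)$. In all cases $(X,A)$ is birationally a scroll over a curve $B$ with $g(B)=q(X)\geq1$.

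Finally, if $X$ has rational singularities this is impossible: on the smooth scroll model $\PP(\sE)\to B$ one computes $K+(n-1)\xi=-\xi+\pi^*(\det\sE+K_B)$, which is negative on the lines of the fibres --- a covering family of curves --- and this negativity transports back to $X$, again using rationality of the singularities so that the pullback of $K_X+(n-1)A$ controls $K+(n-1)\xi$ up to effective discrepancies; this contradicts generic nefness. Hence some $H^0(X,\sO_X(K_X+jA))$ with $1\leq j\leq n-1$ is nonzero. If instead $X$ has irrational singularities the same chain of reasoning shows the only obstruction to the conclusion is that $(X,A)$ be birationally a scroll over a curve of positive genus, which is the stated exception. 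The last assertion is immediate: if $A$ is effective, multiplying a nonzero section of $\sO_X((n-1-j)A)$ by one of $\sO_X(K_X+jA)$ gives a nonzero section of $\sO_X(K_X+(n-1)A)$.

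The main obstacle is the extremal-case analysis of the second paragraph --- extracting the fibration over a curve and identifying the general fibre as $(\PP^{n-1},\sO(1))$, with the case $q(X)=1$ being the delicate one --- together with carrying the $1$-form, Hurwitz and minimal-degree arguments, as well as the concluding scroll exclusion, through a resolution of a singular (and possibly non-rational) $X$. The reduction to the case where $A$ is very ample is a further point that must be handled with care, as it is not formal.
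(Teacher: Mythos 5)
Your overall strategy (adjunction down to a curve section, sectional genus equals irregularity, Castelnuovo--de Franchis/Fujita to exhibit a scroll structure, then contradict generic nefness) is quite different from the paper's, which instead reads the vanishing hypothesis as saying that $1,\dots,n-1$ are roots of the Hilbert polynomial $\chi(X,\sO_X(K_X+tA))$, extracts two identities among $K_X$, $c_2(X)$ and $\chi(X,\sO_X)$, and then runs a case analysis on the dimension of the base of the MRC-fibration using generic nefness of $\Omega_X\hspace{-0.5ex}<\hspace{-0.5ex}A\hspace{-0.5ex}>$, Miyaoka-type $c_2$-inequalities and an MMP. Unfortunately your version has gaps that I do not see how to close.

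First, the entire Koszul-complex computation requires smooth curve sections of $|A|$ itself, but $A$ is only nef and big (not even ample, let alone very ample), and replacing $A$ by a multiple changes every group $H^0(X,\sO_X(K_X+jA))$ in the statement. You flag this reduction as ``not formal,'' but it is in fact impossible in general, and without it the first paragraph does not start. Second, even granting a fibration over a curve $B$ with $C\cdot F=1$, the conclusion $(A|_F)^{n-1}=1\Rightarrow F\cong\PP^{n-1}$ is false for an ample (let alone nef and big) polarisation --- a fake projective plane carries an ample class of self-intersection $1$ --- so identifying the fibre requires the vanishing hypothesis on $F$ together with Fujita's characterisation of $(\PP^{n-1},\sO(1))$ via $(K_F+(n-1)A|_F)\cdot A|_F^{n-2}<0$ (this is how the paper proceeds); likewise invoking Fujita's sectional-genus-one classification for a singular $X$ with non-ample $A$ is unjustified. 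Third, and most decisively, the concluding contradiction is wrong: generic nefness of $K_X+(n-1)A$ is tested only against complete intersections of ample divisors on $X$, not against arbitrary covering families, so negativity of $K+(n-1)\xi$ on the lines of the fibres of a \emph{birational} scroll model does not contradict it. The paper's own example of the cone $S$ over a plane cubic with $A=H$ shows the implication you need is false: $(S,A)$ is birationally a scroll over an elliptic curve, yet $K_S+A=0$ is nef and effective. In the rational-singularities case the theorem is not proved by excluding scrolls; the paper descends the fibration to $X$ via the Albanese map (this is where rationality enters), obtains a section of some $K_F+jA|_F$ on the fibre from generic nefness and Fujita's result, and then produces a global section using Viehweg's positivity of $\psi_*\sO_X(K_{X/Y}+jA)$ and Riemann--Roch on the base curve. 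That mechanism is absent from your proposal.
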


Note that the conclusion of our theorem is {\em a priori}\footnote{It is an open problem due to Tsuji  \cite{Tsu04} whether 
$h^0(X, \sO_X(K_X+jA)) \leq h^0(X, \sO_X(K_X+(j+1)A))$ holds without assuming $A$ effective.} weaker than Conjecture \ref{conjectureBS}, but it should be equally useful for applications.

\subsection{The technique}

Let $X$ be a projective manifold of dimension $n$, and let $A$ be a nef and big Cartier divisor on $X$.   
By the Kawamata-Viehweg vanishing theorem one has
\[
\chi(X, \sO_X(K_X+tA))  =  h^0(X, \sO_X(K_X+tA)) \qquad \forall \ t \in \N, 
\]
so the non-vanishing problem reduces to studying the Hilbert polynomial $\chi(X, \sO_X(K_X+tA))$.  
By Serre duality 
\[
\chi(X, \sO_X(K_X+tA)) = (-1)^n \chi(X, \sO_X(-tA))
\]
is a polynomial of degree $n$ in $t$ which can be computed by the Riemann-Roch formula
\[
\chi(X, \sO_X(-tA)) = [ch(-tA) \cdot  td(T_X)]_n,
\]
where $[ \  ]_n$ denotes the component of degree $n$ in $A(X) \otimes \Q$.
Using the formulae
\[
ch(-tA) = \sum_{k=0}^n \frac{(-tA)^k}{k!}
\]
for the Chern character and
\[
td(T_X) = 1 - \frac{1}{2} K_X + \frac{1}{12} (K_X^2+c_2(X)) + \ldots + \chi(X, \sO_X)
\]
for the Todd class of $T_X$, we see that $\chi(X, \sO_X(K_X+tA))$ equals
\begin{equation} \label{RR}
\frac{A^n}{n!} t^n + 
\frac{A^{n-1} \cdot  K_X}{2(n-1)!} t^{n-1} + 
\frac{A^{n-2} \cdot  (K_X^2+c_2(X))}{12(n-2)!}t^{n-2} +
\ldots + (-1)^n \chi(X, \sO_X). 
\end{equation}

The idea of the proof of Theorem \ref{theoremBS} is now as follows: 
we argue by contradiction and suppose that for all $j \in \{ 1, \ldots, n-1 \}$ we have $h^0(X, \sO_X(K_X+jA)) = 0$.
Thus $1, \ldots, n-1$ are roots of the Hilbert polynomial $\chi(X, \sO_X(K_X+tA))$.
It is an undergraduate exercise to translate the assumption 
into equations involving the coefficients of the Hilbert polynomial above (cf. Lemma \ref{lemmaminusone}),
but it significantly simplifies the problem by reducing it to
controlling the characteristic classes $K_X, c_2(X)$ and $\chi(X, \sO_X)$.
Somewhat surprisingly this immediately allows us to deal with the case where
$X$ is rationally connected (so $\chi(X, \sO_X)=1$).
If $X$ is a minimal model Miyaoka's theorem tells us that the second Chern class $c_2(X)$
is pseudoeffective which is largely sufficient to conclude. 
More generally if $X$ is not uniruled we still know that $\Omega_X$
is generically nef, so a twisted version of Miyaoka's statement \cite[Thm.2.1]{Fuk05}, \cite[Cor.3.13]{a6} 
still allows us to control the second Chern class.
The most delicate case is thus when $X$ is uniruled but not rationally connected.
In particular the case of birational scrolls will need some additional effort.

\begin{definition} \label{definitionbirationalscroll}
Let $X$ be a normal, projective variety, and let $A$ be a nef and big Cartier divisor on $X$.
We say that $(X, A)$ is birationally a scroll if there exists
a birational morphism \holom{\mu}{X'}{X} from a projective manifold $X'$
and a fibration \holom{\varphi}{X'}{Y} onto a projective manifold $Y$
such that the general fibre $F$ admits a birational morphism $\tau: F \rightarrow \PP^{n-m}$
and $\sO_F(\mu^* A) \simeq \tau^* \sO_{\PP^{n-m}}(1)$.
\end{definition}

Using the foliated Mori theory due to Miyaoka and Bogomolov-McQuillan we prove the following:

\begin{theorem}\label{theoremgenericnefCartier}
Let $X$ be a normal, projective variety of dimension $n$.
Let $A$ be a nef and big Cartier divisor on $X$. 
If $(X, A)$ is not birationally a scroll, then $\Omega_X\hspace{-0.8ex}<\hspace{-0.8ex}A\hspace{-0.8ex}>$ is generically nef.
\end{theorem}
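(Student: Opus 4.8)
The plan is to show that a failure of generic nefness for the twisted cotangent sheaf $\Omega_X<A>$ forces the pair $(X,A)$ to be birationally a scroll over a curve; the natural tool is the foliated Mori theory of Miyaoka and Bogomolov--McQuillan applied to the destabilizing subsheaf. First I would pass to a resolution $\mu: X' \to X$ so that $X'$ is a projective manifold and work with $\mu^*A$, which is still nef and big; since $\Omega_{X'}$ surjects onto $\mu^*\Omega_X$ up to lower-dimensional loci, it suffices to understand when $\Omega_{X'}<\mu^*A>$ fails to be generically nef. Recall that generic nefness means nonnegative degree on the restriction to a general complete-intersection curve $C$ cut out by large multiples of an ample divisor. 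So assume there is a subsheaf $\sF \subset \Omega_{X'}<\mu^*A>$ with $\deg(\sF|_C) > 0$; dualizing and clearing the twist, this produces a subsheaf $\sG \subset T_{X'}$ (a coherent subsheaf of the tangent bundle) whose slope with respect to the chosen polarization is suitably negative relative to $\mu^*A$, i.e. $\mu^*A$ fails to be nef on the quotient.

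The second step is to invoke Miyaoka's generic semipositivity theorem: for a projective manifold $X'$, if $\Omega_{X'}$ is \emph{not} generically nef then $X'$ is uniruled, and more precisely the maximal destabilizing (or a suitable negative) subsheaf $\sG \subset T_{X'}$ can be taken to be a foliation (closed under Lie bracket) — this is where Bogomolov--McQuillan enters: a subsheaf of $T_{X'}$ that is negative along a sufficiently general curve, or more precisely one whose restriction to a general member of a covering family of rational curves is ample, is automatically algebraically integrable, and its leaves are rationally connected. Thus we obtain a rationally connected foliation $\sG$, and passing to the associated fibration (after a further birational modification) a fibration $\varphi: X'' \to Y$ whose general fibre $F$ is rationally connected of dimension $m = \rk \sG$.

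The third and most delicate step is to pin down the fibre $F$ and the restriction $\mu^*A|_F$. The quantitative input is that $\sG \subset T_{X'}<-\mu^*A>$, equivalently $\mu^*A$ restricted to a general leaf is "dominated" by the anticanonical-type positivity of the leaf; combined with $\mu^*A$ being nef and big (hence nef and big on the general fibre $F$, since the fibration is over a positive-dimensional base only if $F$ is a proper subvariety — note if $\dim Y = 0$ then $X'$ itself is rationally connected and one argues directly), one gets the numerical inequality $K_F + (\dim F)\,(\mu^*A|_F)$ is not nef, or more precisely that $(\mu^*A|_F)^{\dim F}$ and $K_F \cdot (\mu^*A|_F)^{\dim F -1}$ satisfy the extremal relation characterizing $(F, \mu^*A|_F) = (\PP^{m}, \sO(1))$ up to birational modification. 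By the Kobayashi--Ochiai / Fujita classification of polarized varieties with $(F, L)$ having $K_F + mL$ not nef — the boundary case being a scroll or projective space — we conclude that $F$ admits a birational morphism $\tau: F \to \PP^{n-m}$ with $\mu^*A|_F \simeq \tau^*\sO(1)$. This is exactly the definition of $(X,A)$ being birationally a scroll, completing the contrapositive.

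The main obstacle I anticipate is the third step: extracting from the mere failure of generic nefness (an inequality on one general curve) the \emph{sharp} numerical conclusion that the general fibre of the foliation is projective-space-like with $A$ restricting to $\sO(1)$, rather than some weaker positivity. This requires being careful that the destabilizing subsheaf can be chosen to saturate the correct bound — using that one may rescale by taking the maximal such $\sF$ — and then feeding the precise slope inequality through the Bogomolov--McQuillan bound on the degree of the leaves against $\mu^*A$. The bookkeeping between $X$, its resolution $X'$, and the further modification $X''$ needed to make the foliation a genuine fibration (and checking the scroll structure descends appropriately to a model $X'$ mapping to $X$) is the other place where care is needed, though it is routine birational geometry rather than a conceptual difficulty.
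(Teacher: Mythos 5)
Your overall strategy matches the paper's: failure of generic nefness of $\Omega_X\hspace{-0.8ex}<\hspace{-0.8ex}A\hspace{-0.8ex}>$ produces a positive-slope destabilizing subsheaf $\sF_1 \subset T_X$ with $\sF_1|_C$ ample on an MR-general curve $C$, Bogomolov--McQuillan makes it an algebraically integrable foliation with rationally connected leaf closures, and one passes to the associated almost-fibration $\varphi: X' \to Y$. The final ingredient you name (the Fujita-type classification, quoted in the paper as Proposition \ref{propositionbasic}.b)+c) via \cite{Fuj89}) is also the right one. However, there is a genuine gap exactly at the step you flag as the main obstacle, and the mechanism you gesture at would not close it. The slope inequality $\mu(\sF_1\hspace{-0.8ex}<\hspace{-0.8ex}-A\hspace{-0.8ex}>)>0$ is an inequality along the curve $C$, which is \emph{transverse} to the leaves; it does not directly say anything about $K_F$ or $A|_F$ on a fibre $F$, so one cannot read off that ``$K_F+(\dim F)A|_F$ is not nef'' or any extremal relation on $F$. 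Moreover $\det\sF_1$ need not agree with $-K_{X'/Y}$ because of multiple and singular fibres, a difficulty the paper explicitly isolates.

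The paper's proof (Theorem \ref{theoremgenericnef}) bridges this gap with two ideas absent from your proposal. First, a base change: one forms the normalization $X_C$ of $X'\times_Y C$, which carries a section $s(C)$; by \cite[Rem.19]{KST07}, $X_C$ is smooth near $s(C)$ with $T_{X_C/C}\simeq p_X^*\mu^*\sF_1$ there, and since $s(C)$ misses all multiple fibre components the slope inequality becomes $(-K_{X_C/C}-(n-m)p_X^*\mu^*A)\cdot s(C)>0$. Second, a weak-positivity argument (Lemma \ref{lemmadirectimage}): if some $H^0(F,\sO_F(D))\neq 0$ with $D\sim_\Q K_F+jA|_F$, $0\le j\le n-m$, then $\varphi_*\sO(m(K_{X_C/C}+jA))$ is weakly positive, so $K_{X_C/C}+jp_X^*\mu^*A$ is pseudoeffective; since $s(C)$ has ample normal bundle $\sF_1|_C$, every effective divisor meets it nonnegatively, contradicting the displayed inequality. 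This yields $H^0(F,\sO_F(K_F+jA))=0$ for all $j\in\{1,\dots,n-m\}$, and only then does Fujita's theorem give the birational morphism $\tau:F\to\PP^{n-m}$ with $\sO_F(A)\simeq\tau^*\sO_{\PP^{n-m}}(1)$. Without the base change and the direct-image positivity your argument cannot transfer the positivity along $C$ into information on the fibres, so the proof as proposed is incomplete.
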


This theorem can be seen as a foliated version of the well-known statement that
if $X$ is a projective manifold and $A$ is ample, then $K_X+nA$ is nef unless $X \simeq \PP^n$
and $A \simeq \sO_{\PP^n}(1)$. Note that if $(X, A)$ is birationally a scroll, then
$\Omega_X\hspace{-0.8ex}<\hspace{-0.8ex}A\hspace{-0.8ex}>$ is not generically nef even if we
assume that $\det (\Omega_X\hspace{-0.8ex}<\hspace{-0.8ex}A\hspace{-0.8ex}>)$ is generically nef.
Indeed for $n \geq 2$,
set $X:=Y \times \PP^{n-m}$ where $Y$ is a projective manifold of dimension $1 \leq m \leq n-1$ with nef canonical divisor. 
Let $A_Y$ be an ample Cartier divisor on $Y$ and $H$ be the hyperplane divisor on  $\PP^{n-m}$, 
then $A:=p_Y^* A_Y+p_{\PP^{n-m}}^*H$ is ample and $K_X+nA$ is nef. Nevertheless the twisted bundle
$\Omega_X\hspace{-0.8ex}<\hspace{-0.8ex}A\hspace{-0.8ex}>$ is not generically nef: 
this would imply that $\Omega_{X/Y}\hspace{-0.8ex}<\hspace{-0.8ex}A\hspace{-0.8ex}>$
is generically nef, yet even its determinant 
$$
K_{X/Y}+(n-m)A = 
(n-m) p_Y^* A_Y - p_{\PP^{n-m}}^*H
$$ 
is not generically nef. 

\subsection{Generalisations and applications}

In dimension three, the techniques developed for the proof of Theorem \ref{theoremBS} 
give an affirmative answer for the stronger non-vanishing conjecture of
Ambro, Ionescu and Kawamata which so far is only known in rather special cases.

\begin{theorem} \label{theoremIK}
Let $X$ be a normal, projective threefold with at most $\Q$-factorial canonical singularities, 
and let $A$ be a nef and big Cartier divisor on $X$
such that $K_X+A$ is nef. Then we have
\[
H^0(X, \sO_X(K_X+A)) \neq 0. 
\]
\end{theorem}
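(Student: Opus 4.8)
The plan is to run the same Riemann--Roch strategy as for Theorem \ref{theoremBS}, but now exploiting that only the single value $j=1$ needs to be excluded, so the contradiction is forced much more easily. First I would reduce to the smooth case: since $X$ has $\Q$-factorial canonical singularities, a resolution $\mu : X' \to X$ gives $K_{X'} = \mu^* K_X + E$ with $E$ effective $\mu$-exceptional, and $\mu^* A$ is nef and big; by Kawamata--Viehweg vanishing on $X'$ and the projection formula one has $H^0(X, \sO_X(K_X+A)) = H^0(X', \sO_{X'}(K_{X'}+\mu^*A))$, so it suffices to prove non-vanishing on the manifold $X'$ with the nef and big divisor $\mu^*A$. (One must check $K_{X'}+\mu^*A$ remains in the relevant range; in the threefold case the argument will actually only use nefness and bigness of $\mu^*A$ together with generic nefness of the twisted cotangent sheaf, not nefness of $K_{X'}+\mu^*A$ itself.) So from now on $X$ is a smooth projective threefold and $A$ is nef and big.

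Next I argue by contradiction: suppose $h^0(X, \sO_X(K_X+A)) = 0$. By Kawamata--Viehweg vanishing $\chi(X, \sO_X(K_X+tA)) = h^0(X,\sO_X(K_X+tA))$ for $t \geq 1$, so $t=1$ is a root of the degree-three Hilbert polynomial (\ref{RR}). Combining this with the values of (\ref{RR}) at $t=0$ (which gives $\chi(X,\sO_X)$, up to sign) and the analogue at $t=-1$ computed via Serre duality (cf. the referenced Lemma \ref{lemmaminusone}), one obtains a linear relation among $A^3$, $A^2\cdot K_X$, $A\cdot(K_X^2+c_2(X))$ and $\chi(X,\sO_X)$. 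The point is to turn this into a numerical inequality that cannot hold. I would split into cases according to the position of $X$ in the MMP. If $X$ is not uniruled, then $\Omega_X$ is generically nef, and by the twisted Miyaoka-type statement cited in the excerpt, $A\cdot c_2(\Omega_X<A>)$ — hence, after expanding, $A\cdot(c_2(X) + \text{(positive combination of }A\cdot K_X, A^2)\text{)}$ — is $\geq 0$; since $A$ is nef and big each intersection number $A^3, A^2\cdot K_X$ (via $K_X$ pseudoeffective on a minimal model), $A\cdot c_2$ can be signed, and $\chi(X,\sO_X)$ is controlled, forcing the polynomial to be strictly positive at $t=1$. If $X$ is uniruled, I use Theorem \ref{theoremgenericnefCartier}: unless $(X,A)$ is birationally a scroll, $\Omega_X<A>$ is generically nef, and the same inequality applies. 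When $X$ is rationally connected ($\chi(X,\sO_X)=1$), the excerpt already notes the Riemann--Roch coefficients alone suffice.

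The main obstacle is the remaining case: $X$ uniruled but not rationally connected, and in particular the birationally-a-scroll case, where $\Omega_X<A>$ genuinely fails to be generically nef and so the $c_2$-positivity input is unavailable. Here I would analyze the scroll structure directly: a birational scroll over a curve $B$ of positive genus has $X'$ birational to a $\PP^{n-1}$-bundle over $B$ with $\mu^*A$ restricting to $\sO(1)$ on fibres, so $\chi(X,\sO_X) = \chi(B,\sO_B) = 1-g \leq 0$ while $K_X+A$ restricted to a general fibre $\PP^2$ is $\sO_{\PP^2}(-2)$, which has no sections — but sections of $K_X+A$ can still come from the base via $H^0(B, (\text{something twisted by }K_B))$; one computes the pushforward $\varphi_* \sO_{X'}(K_{X'}+\mu^*A)$ explicitly (it is $K_B \otimes \mathrm{Sym}^{?}(\text{dual of the bundle}) \otimes \det$-type correction) and shows it has a section using $g \geq 1$ hence $\deg K_B \geq 0$, or else shows $(X,A)$ cannot satisfy the hypothesis $K_X+A$ nef unless the relevant twist is effective. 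Over a base that is not a curve, dimension reasons or induction on $\dim X$ handle it. Making this pushforward computation clean, and checking it interlocks with the "$K_X+A$ nef" hypothesis to rule out the genuinely negative subcase, is where the real work lies.
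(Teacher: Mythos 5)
There is a genuine gap, and it sits exactly where you locate ``the real work.'' The paper's proof does not run the root-counting argument of Lemma \ref{lemmaminusone} at all (that lemma needs both $t=1$ and $t=2$ to be roots, and here you only have $t=1$); instead it bounds $\chi(X,\sO_X(K_X+A))$ from below directly, and the decisive input is that $\Omega_X\hspace{-0.8ex}<\hspace{-0.8ex}\frac{2}{3}A\hspace{-0.8ex}>$ is generically nef --- note the coefficient $\frac{2}{3}$, not $1$. This is quantitatively essential: with the twist by $A$ that you propose to import from Theorem \ref{theoremgenericnefCartier}, Corollary \ref{corollarymiyaokaQ} (with $D=3A$) only gives $(K_X+2A)\cdot c_2(X)\geq -(K_X+2A)\cdot(2K_X\cdot A+3A^2)$, and feeding this into Riemann--Roch yields $\chi(X,\sO_X(K_X+A))\geq -\frac{1}{24}A^2\cdot(K_X+2A)$, which has the wrong sign. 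The $\frac{2}{3}$-claim is where the hypotheses ``$K_X+A$ nef and big'' and ``Cartier'' genuinely enter: one applies Theorem \ref{theoremgenericnef} with the $\Q$-divisor $\frac{2}{3}A$ and kills the exceptional fibration by a case split on $\dim Y$ --- for $\dim Y=2$ the fibres are $\PP^1$'s with $(K_{X'}+\mu^*A)\cdot F>0$, $K_{X'}\cdot F=-2$ and $A$ Cartier force $A\cdot F\geq 3$, so $K_F+\frac{2}{3}A|_F$ is effective; for $\dim Y=1$ one invokes the surface case (Theorem \ref{theoremBS}) on the fibre. Your alternative plan for the scroll case (computing $\varphi_*\sO_{X'}(K_{X'}+\mu^*A)$ on a $\PP^2$-bundle over a curve) does not obviously close, since the restriction of $K_{X'}+\mu^*A$ to a fibre is $\sO_{\PP^2}(-2)$ and the pushforward is therefore zero; the paper avoids this entirely by never needing the twist-by-$A$ statement in the scroll case.

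Two further points in your reduction would also fail as stated. First, the theorem only assumes $K_X+A$ nef; the paper first invokes Kawamata's result to reduce to $K_X+A$ nef \emph{and big}, a step you omit. Second, the paper reduces not to a smooth resolution but to a crepant terminalisation, precisely so that $K_X+A$ and $K_X+2A$ stay nef (and big) upstairs: nefness of $K_X+2A$ is required by Corollary \ref{corollarymiyaokaQ}, and strict positivity of $(K_X+A)\cdot A\cdot(K_X+2A)$ is what makes $\chi>0$ at the end. Your parenthetical claim that the argument ``will actually only use nefness and bigness of $\mu^*A$ \ldots not nefness of $K_{X'}+\mu^*A$ itself'' is therefore incorrect --- without the nefness hypothesis the statement is false in spirit (cf.\ the Iano-Fletcher/Broustet example in the introduction). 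Working on the terminal model also gives access to Reid's Riemann--Roch with its non-negative local corrections and to Kawamata's inequality $\chi(X,\sO_X)\geq-\frac{1}{24}K_X\cdot c_2(X)$, which replaces your vague ``$\chi(X,\sO_X)$ is controlled.''
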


Note that while $K_X$ is only supposed to be $\Q$-Cartier, it is crucial for our proof to suppose that $A$ is Cartier.
In fact the statement is false if $A$ is a Weil divisor which is merely $\Q$-Cartier:
Iano-Fletcher has constructed an example \cite[Ex.16.1]{IF00} of a $\Q$-Fano threefold $X$ of index $30$ with terminal singularities 
such that
$$
H^0(X,  \sO_X(-K_X)) = H^0(X, \sO_X(K_X+ 2(-K_X)))=0.
$$
As A. Broustet pointed out to me, a desingularisation of the threefold in the Iano-Fletcher example 
gives an example of a smooth projective threefold $X$
and $A$ a big Cartier divisor on $X$ such that $K_{X}+A$ is pseudoeffective but not effective. Thus our statement
is almost optimal. A direct consequence of Theorem \ref{theoremIK} is the following special case of Lazarsfeld's conjecture on Seshadri constants
(cf. \cite[Lemme 4.11]{Bro09}).

\begin{theorem} 
Let $X$ be a normal, projective threefold with at most $\Q$-factorial canonical singularities, 
and let $A$ be a nef and big Cartier divisor on $X$
such that $K_X+A$ is nef and big. Then we have
\[
\varepsilon(K_X+A, x) \geq 1
\]
for every $x \in X$ sufficiently general.

In particular if the anticanonical divisor of $X$ is nef and $L$ is a nef and big Cartier divisor on $X$, then
\[
\varepsilon(L, x) \geq 1
\]
for every $x \in X$ sufficiently general.
\end{theorem}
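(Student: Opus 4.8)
The plan is to deduce the statement from the effective non-vanishing of Theorem~\ref{theoremIK}, via the reduction of this case of Lazarsfeld's conjecture to non-vanishing in the spirit of \cite[Lemme~4.11]{Bro09}. Since $K_X+A$ is nef, Theorem~\ref{theoremIK} gives $H^0(X,\sO_X(K_X+A))\neq 0$; fix an effective Weil divisor $D\sim K_X+A$. Then $\Bs|K_X+A|$ is a proper closed subset, and, $K_X+A$ being nef and big, so is its augmented base locus $\mathbb{B}_+(K_X+A)$; also $\sing X$ has codimension $\geq 2$ (as $X$ is normal), hence dimension $\leq 1$. I argue by contradiction, assuming that $\varepsilon(K_X+A,x)<1$ for a very general point $x\in X$; by the curve description of the Seshadri constant this means that through such an $x$ there is an irreducible curve $C$ with $(K_X+A)\cdot C<\operatorname{mult}_x C$.

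The analysis of Seshadri constants at a very general point (Ein--K\"uchle--Lazarsfeld; see the proof of \cite[Lemme~4.11]{Bro09}) then produces the following alternative: either (i) there is an irreducible family of curves $\{C_t\}_{t\in T}$ covering $X$ such that for a general pair $(t,x)$ with $x\in C_t$ one has $(K_X+A)\cdot C_t<\operatorname{mult}_x C_t$ with $x$ a \emph{general} point of $C_t$, so that $\operatorname{mult}_x C_t=1$ and hence $(K_X+A)\cdot C_t<1$; or (ii) the $\varepsilon$-exceptional curves through the very general point sweep out an irreducible surface $S\subsetneq X$ moving in a covering family. The divisor $D$ is used here to control the subvarieties that occur (a general member is not contained in $\operatorname{Supp}D$), and the reduction to $\operatorname{mult}_x C_t=1$ — the technical core of \cite[Lemme~4.11]{Bro09} — is exactly where the effective non-vanishing of Theorem~\ref{theoremIK} is invoked.

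In case (i), since the $C_t$ cover $X$ we have $\dim T\geq 2$, while $\sing X$ has dimension $\leq 1$, so a dimension count shows that a general $C_t$ avoids $\sing X$; along such a curve $K_X$, and hence $K_X+A$, is Cartier, so $(K_X+A)\cdot C_t\in\Z$, and combined with the nefness of $K_X+A$ this forces $(K_X+A)\cdot C_t=0$. But a general $C_t$ is not contained in $\mathbb{B}_+(K_X+A)$, so writing $m(K_X+A)=H+E$ with $H$ ample and $E$ effective (Kodaira's lemma) and choosing $C_t\not\subset\operatorname{Supp}E$ yields $(K_X+A)\cdot C_t\geq\tfrac{1}{m}H\cdot C_t>0$, a contradiction. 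In case (ii), pass to a resolution $\nu\colon\tilde S\to S$ of a general member; as $S\not\subset\mathbb{B}_+(K_X+A)$, the pullback $\nu^*(K_X+A)|_{\tilde S}$ is nef and big on the smooth surface $\tilde S$, hence $\varepsilon\big(\nu^*(K_X+A)|_{\tilde S},\tilde x\big)\geq 1$ at a general point $\tilde x$ by the surface case of the conjecture (Ein--Lazarsfeld, or by rerunning the curve argument on $\tilde S$); since the $\varepsilon$-exceptional curves lie on $S$, this contradicts $\varepsilon(K_X+A,x)<1$. Either way we reach a contradiction, so $\varepsilon(K_X+A,x)\geq 1$ for general $x$. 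I expect the main obstacle to be precisely the Seshadri-theoretic input, that is, establishing the above alternative together with the multiplicity reduction of \cite[Lemme~4.11]{Bro09}.

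For the final assertion, assume $-K_X$ is nef and $L$ is nef and big Cartier. When $K_X+L$ is nef and big, the first part applied with $A:=L$ gives $\varepsilon(K_X+L,x)\geq 1$ for general $x$, and since $-K_X$ is nef while Seshadri constants are superadditive — in particular non-decreasing under adding a nef class — we get $\varepsilon(L,x)=\varepsilon\big((K_X+L)+(-K_X),x\big)\geq\varepsilon(K_X+L,x)\geq 1$; the remaining configurations reduce to this one.
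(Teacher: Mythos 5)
For the first assertion your route is essentially the paper's: the paper offers no argument beyond declaring the statement a direct consequence of Theorem \ref{theoremIK} via \cite[Lemme 4.11]{Bro09}, and your sketch is a reasonable unwinding of that reduction, with the genuinely hard steps (the Ein--K\"uchle--Lazarsfeld alternative at a very general point and the multiplicity-one reduction) correctly deferred to Broustet's lemma rather than reproved.

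The gap is in the final assertion. You apply the first part with $A:=L$, which requires $K_X+L$ to be nef and big; this fails in general (take $X=\PP^3$ and $L=\sO_{\PP^3}(1)$: here $-K_X$ is nef and $L$ is ample, but $K_X+L=\sO_{\PP^3}(-2)$ is not nef), and the claim that ``the remaining configurations reduce to this one'' is not substantiated --- the obvious rescaling $L\mapsto mL$ only yields $\varepsilon(L,x)\geq 1/m$, which is useless. The intended deduction is the substitution $A:=L-K_X=L+(-K_X)$: this class is nef (sum of nef classes) and big (big plus nef), and $K_X+A=L$ is nef and big by hypothesis, so the first assertion applied to this $A$ gives $\varepsilon(L,x)=\varepsilon(K_X+A,x)\geq 1$ directly, with no superadditivity and no case distinction. (This does require $-K_X$ to be Cartier so that $A$ is Cartier, as Theorem \ref{theoremIK} demands; that is an implicit hypothesis of the statement rather than a defect of the substitution, and your chosen route does not avoid it either, since you also invoke the first part.)
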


{\bf Acknowledgements.} I would like to thank A. Broustet, S. Boucksom and T. de Fernex for
discussions on various questions related to this paper. I would like to thank the referee for
pointing out a number of rather serious mistakes in the initial version of this paper.

\section{Notation and basic material}

We work over the complex numbers, topological notions always refer to the Zariski topology.
For general definitions we refer to Hartshorne's book \cite{Har77}.
We will frequently use standard terminology and results 
of the minimal model program (MMP) as explained in \cite{KM98} or \cite{Deb01}.

A variety is an integral scheme of finite type over $\C$, a manifold is a smooth variety.
A fibration is a proper, surjective 
morphism \holom{\varphi}{X}{Y} between normal varieties such that
$\dim X>\dim Y$ and $\varphi_* \sO_X \simeq \sO_Y$,
that is all the fibres are connected. Fibres are always scheme-theoretic fibres.
Points are always supposed to be closed.

Let $X$ be a normal variety. The singular locus of $X$ has codimension at least two,
so we have an isomorphism $Cl(X_{\nons}) \rightarrow Cl(X)$. 
We define the canonical divisor $K_X \in Cl(X)$ as the image of $\det T_{X_{\nons}}$.
Let \holom{\varphi}{X}{Y} be a fibration between projective manifolds.
We set
\[
K_{X/Y} := K_X - \varphi^* K_Y
\]
for the relative canonical divisor. 

A property (smoothness, local freeness, etc.) depending on a point $x \in X$ holds in codimension $k$,
if there exists a closed subset $Z \subset X$ of codimension bigger than $k$ such that
the property holds for every $x \in X \setminus Z$.

Let $X$ be a normal, projective variety.
A $\Q$-divisor will always be a $\Q$-Weil divisor, not necessarily $\Q$-Cartier. We will frequently use that
on a normal variety $X$, there is a bijection between Weil divisors $D$ and reflexive sheaves of rank one $\sO_X(D)$
\cite[App., Thm. 3]{Rei79}.

Let $X$ be  a normal, projective variety.
For every $k \in \{ 0, \ldots, \dim X\}$ we denote by $A_k(X)$ the group of $k$-dimensional cycles modulo rational equivalence,
and by $\pic(X)$ the group of isomorphism classes of line bundles.  
We denote by
\[
\pic(X)^k \times A_k(X) \rightarrow \Z, \ (D_1, \ldots, D_k, [Z]) \ \mapsto D_1 \cdots D_k \cdot [Z] 
\]
the intersection product as defined in \cite[Ch.2]{Ful84}. More generally if we consider Cartier divisors and 
cycles with coefficients in $\Q$, we get a pairing with values in $\Q$
which we often abbreviate by
\[
D_1 \cdots D_k \cdot [Z] =: D_1 \cdots D_k \cdot Z.
\]
Suppose now that $X$ is a normal, projective variety of dimension $n$ that is smooth in codimension two.
Then we have an isomorphism $A_{n-2}(X_{\nons}) \rightarrow A_{n-2}(X)$, 
so if $E$ is a coherent sheaf on $X$, we define
$c_2(E) \in A_{n-2}(X)$ as the image of $c_2(E|_{X_{\nons}})$ under this isomorphism.  
In particular we define the second Chern class $c_2(X)$ as the image of $c_2(T_{X_{\nons}})$.

We denote by $N^1(X)_\R$ the vector space of $\R$-Cartier divisors modulo numerical equivalence,
and by $N_1(X)_\R$ its dual, the space of 1-cycles modulo numerical equivalence.
A divisor class $\alpha \in N^1(X)_\R$ is pseudoeffective if it is in the closure of the cone of effective divisors in $N^1(X)_\R$.
By \cite{BDPP04} this is equivalent to
\[
\alpha \cdot C \geq 0
\]
for every $C$ a member of a covering family of curves for $X$.

Birationally, every projective manifold admits a fibration that separates the rationally connected part and the non-uniruled part: the 
{\it MRC-fibration} or {\it rationally connected quotient}:

\begin{theorem} \cite{Cam92}, \cite{GHS03}, \cite{KMM92}
\label{theoremMRC}
Let $X$ be a uniruled, projective manifold. Then there exists a projective manifold $X'$,
a birational morphism \holom{\mu}{X'}{X} and a fibration 
\holom{\varphi}{X'}{Y} onto a  projective manifold $Y$ 
such that the general fibre is rationally connected and the variety $Y$ is 
not uniruled. 
\end{theorem}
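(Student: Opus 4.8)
The plan is to combine the construction of the maximal rationally connected (MRC) quotient, due to Campana and Koll\'ar--Miyaoka--Mori, with the section theorem of Graber--Harris--Starr. First I would recall the MRC construction: calling two points of $X$ equivalent if they lie on a connected chain of rational curves, one shows --- via the standard smoothing-of-chains arguments (free rational curves, combs, bend-and-break) --- that on a dense open subset $X_0 \subseteq X$ this relation admits a quotient $\pi_0 \colon X_0 \to Y_0$ whose fibres are exactly the equivalence classes, and that these fibres are in fact rationally connected. Compactifying $Y_0$ to a normal projective variety and then resolving both it and the rational map $X \dashrightarrow Y_0$, I obtain a projective manifold $X'$, a birational morphism $\mu \colon X' \to X$ and a morphism $\varphi \colon X' \to Y$ onto a projective manifold $Y$ whose general fibre $F$ is rationally connected (rational connectedness is a birational invariant of smooth projective varieties, so the resolutions do no harm). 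Moreover $\varphi$ is \emph{maximal} in the sense of the MRC construction: for very general $x \in X'$, every rationally connected subvariety of $X'$ through $x$ is contained in the fibre $\varphi^{-1}(\varphi(x))$.

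The heart of the matter is to show that $Y$ is not uniruled, which is where Graber--Harris--Starr enters. Suppose, for contradiction, that $Y$ is uniruled. Since uniruledness means rational curves pass through a dense open set of points, I may choose a morphism $\nu \colon \PP^1 \to Y$ whose image passes through a very general point of $Y$ and is not contained in the locus where $\varphi$ degenerates. Form the fibre product $X'' := X' \times_Y \PP^1$, which is projective; after replacing it by a resolution, the induced map $X'' \to \PP^1$ is a fibration over a rational curve whose general fibre is the rationally connected variety $F$. By the Graber--Harris--Starr theorem the total space of a fibration over a rationally connected base with rationally connected general fibre is itself rationally connected, so $X''$ is rationally connected; hence so is the image of the natural morphism $X'' \to X'$, which is the subvariety $\varphi^{-1}(\nu(\PP^1))$. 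This subvariety has dimension $\dim F + 1$, it contains a fibre $\varphi^{-1}(y)$ for $y$ a very general point of $Y$, hence it contains a very general point of $X'$ and strictly contains the fibre of $\varphi$ through it --- contradicting the maximality of the MRC fibration. Therefore $Y$ is not uniruled.

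Finally I would tidy up: since the general fibre $F$, being rationally connected, is connected, and $Y$ is normal (indeed smooth), the Stein factorisation of $\varphi$ gives $\varphi_* \sO_{X'} \simeq \sO_Y$, so $\varphi$ is a fibration in the sense fixed earlier in the paper; this does not affect the non-uniruledness of $Y$ or the rational connectedness of the general fibre. (If $X$ is itself rationally connected, $Y$ is a point and there is nothing to prove.)

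The step I expect to be the main obstacle is exactly the non-uniruledness of $Y$: the MRC quotient construction is by now routine, but the fact that its base carries no rational curve is genuinely deep and rests entirely on Graber--Harris--Starr (equivalently, on the existence of sections of one-parameter families of rationally connected varieties over a curve). A secondary technical point, standard but needing care, is to arrange that the very general point witnessing the maximality of $\varphi$, the general point of $Y$ through which a rational curve is chosen, and the open locus over which $\varphi$ and $\pi_0$ are well-behaved can all be realised simultaneously.
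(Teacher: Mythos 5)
Your proposal is correct and is exactly the argument behind the cited result: the paper gives no proof of its own but refers to Campana and Koll\'ar--Miyaoka--Mori for the construction of the MRC quotient and to Graber--Harris--Starr for the non-uniruledness of the base, which is precisely the two-step structure you describe. Nothing further is needed.
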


\begin{remarks} \label{remarksRCquotient} 
\begin{enumerate}
\item We call $Y$ the base of the MRC-fibration. This is a slight abuse of language since the MRC-fibration is 
only unique up to birational equivalence of fibrations (cf. \cite{Cam04b}). Since the dimension of $Y$ does not depend
on the birational model, it still makes sense to speak of the dimension of
the base of the MRC-fibration. 
\item If $X$ is a normal, projective variety, we define the MRC-fibration of $X$ to be the MRC-fibration of
some desingularisation $X' \rightarrow X$. 
We say that a normal variety is rationally connected if $X'$ is
rationally connected. Note that with this definition a cone over an elliptic curve is not rationally connected
(it is merely rationally chain-connected).  
\item The MRC-fibration is almost regular, i.e. there exist open dense sets $X_0 \subset X$ and $Y_0 \subset Y$ such that
the restriction of the rational map $\merom{\varphi}{X}{Y}$ to $X_0$ gives a regular (proper) fibration
$\holom{\varphi|_{X_0}}{X_0}{Y_0}$. In particular we can see the general $\varphi$-fibre as a submanifold of $X$. 
Note also that if $Y$ has dimension one, the almost regular map $\varphi$ is regular.
\end{enumerate}
\end{remarks}

\subsection{$\Q$-twisted sheaves and generic nefness}

We adapt the notion of $\Q$-twisted vector bundles \cite[Ch.6.2]{Laz04b} to our setting.

\begin{definition} \label{definitiontwistedsheaf} \cite{Miy87}
Let $X$ be a normal, projective variety. A $\Q$-twisted sheaf 
\[
\sF\hspace{-0.8ex}<\hspace{-0.8ex}\delta\hspace{-0.8ex}>
\]
is an ordered pair consisting of a coherent sheaf $\sF$  
and a numerical equivalence class $\delta \in N^1(X)_\Q$. 
The $\Q$-twisted sheaf $\sF\hspace{-0.8ex}<\hspace{-0.8ex}\delta\hspace{-0.8ex}>$ is torsion-free
if $\sF$ is torsion-free.
If $A$ is 
$\Q$-Cartier $\Q$-divisor on $X$ we write $\sF\hspace{-0.8ex}<\hspace{-0.8ex}A\hspace{-0.8ex}>$ for the twist of $\sF$ by the numerical class of $A$.
\end{definition}

In the situations we are interested in $\sF$ will either be a torsion-free sheaf on a normal variety
or the sheaf of K\"ahler differentials of a normal variety. 

\begin{definition} \label{definitionmrgeneral}
Let $X$ be a normal, projective variety, and let 
$H_1, \ldots, H_{n-1}$ be a collection of ample Cartier divisors.
A MR-general curve $C \subset X$ is an intersection
\[
D_1 \cap \ldots \cap D_{n-1}
\]
for general $D_j \in | m_j H_j |$ where $m_j \gg 0$. 
\end{definition}

\begin{remark} \label{remarkmrgeneral}
The abbreviation MR stands of course for Mehta-Ramanathan, alluding to the well-known fact \cite{MR82}
that the Harder-Narasimhan filtration of a torsion-free sheaf commutes with restriction to a MR-general curve. 
\end{remark}

Let $X$ be a normal, projective variety, and let  $\sF$ be a coherent sheaf that is locally free in codimension one.
A MR-general curve $C$ is contained in the open set 
where $\sF$ is locally free. Thus  $\sF|_C:=\sF \otimes \sO_C$ is a vector bundle and the following definition makes sense.

\begin{definition} \label{definitiongenericallynef}
Let $X$ be a normal, projective variety of dimension $n$, and let 
$\sF$   be a coherent sheaf on $X$ that is locally free in codimension one.
The $\Q$-twisted sheaf $\sF\hspace{-0.8ex}<\hspace{-0.8ex}\delta\hspace{-0.8ex}>$ 
is generically nef if its restriction to every MR-general curve $C$ is a nef $\Q$-vector bundle 
in the sense of \cite[Defn. 6.2.3]{Laz04b}, i.e.
\[
c_1(\sO_{\PP(\sF|_C)}(1))+\pi^* \delta
\]
is a nef class in $N^1(\PP(\sF|_C))_\R$, where $\sO_{\PP(\sF|_C)}(1)$ is the tautological line bundle
on the projectivised vector bundle $\PP(\sF|_C)$.

A $\Q$-divisor $D$ on $X$ is generically nef if 
\[
D \cdot H_1 \cdots H_{n-1} \geq 0
\]
for any collection of ample Cartier divisors $H_1, \ldots, H_{n-1}$.
\end{definition}

\begin{remarks}
a) An effective $\Q$-divisor $D$ is generically nef. 

b) A $\Q$-divisor $D$ on $X$ is generically nef if for $m\gg0$ sufficiently divisible the reflexive sheaf $\sO_X(mD)$ is generically nef.

c) If $D$ is a pseudoeffective $\Q$-Cartier $\Q$-divisor, it is generically nef.
\end{remarks}

For lack of reference we collect some basic properties of generically nef sheaves. The proof is elementary and left to the reader.

\begin{lemma} \label{lemmagenericallynef}
\begin{enumerate}
\item Let $X$ be a normal, projective variety of dimension $n$, and let 
$\sF$   be a coherent sheaf on $X$ that is locally free in codimension one.
The $\Q$-twisted sheaf $\sF\hspace{-0.8ex}<\hspace{-0.8ex}\delta\hspace{-0.8ex}>$  is generically nef if and only if its bidual $\sF^{**}\hspace{-0.8ex}<\hspace{-0.8ex}\delta\hspace{-0.8ex}>$
is generically nef.
\item A $\Q$-divisor $D$  on a normal, projective variety $X$ is generically nef if and only if 
\[
D \cdot H_1 \cdots H_{n-1} \geq 0
\]
for any collection of nef Cartier divisors $H_1, \ldots, H_{n-1}$.
\item Let \holom{\mu}{X'}{X} be a birational morphism between normal varieties, and
let $A$ be $\Q$-Cartier $\Q$-divisor on $X$. Let $C \subset X$ be a MR-general curve\footnote{A MR-general curve
does not meet the image of the exceptional locus, so we can consider it also as a curve in $X'$.}, then
\[
(K_{X'}+\mu^*A) \cdot C =  (K_{X}+A) \cdot C.
\]
If $K_X+A$ is not generically nef, then $K_{X'}+\mu^* A$ is not generically nef.
\end{enumerate}
\end{lemma}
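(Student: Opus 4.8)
The plan is to dispatch the three assertions in turn, each by an elementary argument. For a), I would exploit that $\sF$ and its bidual agree away from a closed set of codimension at least two. Since $\sF$ is locally free in codimension one, there is a closed subset $Z\subset X$ with $\codim_X Z\ge 2$ over whose complement $\sF$ is locally free, hence reflexive, so the natural morphism $\sF\to\sF^{**}$ restricts to an isomorphism on $X\setminus Z$; in particular $\sF^{**}$ is again locally free in codimension one, so the statement is meaningful. A MR-general curve $C$ is a complete intersection of general members of very ample linear systems of arbitrarily high degree, hence avoids the fixed set $Z$, so $\sF|_C\to\sF^{**}|_C$ is an isomorphism of vector bundles on $C$, and likewise after twisting by $\delta$. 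Since generic nefness of a $\Q$-twisted sheaf is tested precisely on the restrictions to such curves, $\sF\hspace{-0.8ex}<\hspace{-0.8ex}\delta\hspace{-0.8ex}>$ and $\sF^{**}\hspace{-0.8ex}<\hspace{-0.8ex}\delta\hspace{-0.8ex}>$ are simultaneously generically nef.

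For b), one direction is immediate since an ample divisor is nef. For the converse I would use that nef classes are limits of ample ones: fix an ample Cartier divisor $H$, let $G_1,\dots,G_{n-1}$ be nef Cartier divisors, and note that for every rational $\varepsilon>0$ each $G_i+\varepsilon H$ is an ample $\Q$-Cartier divisor. Clearing a common denominator and invoking the multilinearity of the intersection pairing together with the hypothesis gives $D\cdot(G_1+\varepsilon H)\cdots(G_{n-1}+\varepsilon H)\ge 0$; this expression is a polynomial in $\varepsilon$, so letting $\varepsilon\to 0$ yields $D\cdot G_1\cdots G_{n-1}\ge 0$.

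For c), I would first transport $C$ to $X'$. Since $X$ is normal, the image $W\subset X$ of the exceptional locus of $\mu$ has codimension at least two and $\mu$ induces an isomorphism over $X\setminus W$. A MR-general curve $C$ avoids the codimension-two sets $W$ and $\mathrm{Sing}(X)$, hence lies in $X\setminus W$, and we identify it with its isomorphic preimage in $X'$, which then avoids the exceptional locus of $\mu$ and $\mathrm{Sing}(X')$. On a neighbourhood of $C$ the morphism $\mu$ is thus an isomorphism of smooth varieties, so $\mu^*\omega_X\cong\omega_{X'}$ there and $K_{X'}\cdot C=K_X\cdot C$; together with $\mu^*A\cdot C=A\cdot C$ (projection formula) this gives $(K_{X'}+\mu^*A)\cdot C=(K_X+A)\cdot C$. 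For the final assertion, if $K_X+A$ is not generically nef, then by Definition \ref{definitiongenericallynef} there are ample Cartier divisors $H_1,\dots,H_{n-1}$ with $(K_X+A)\cdot H_1\cdots H_{n-1}<0$; the pullbacks $\mu^*H_i$ are nef Cartier divisors on $X'$, and the projection formula, together with $\mu_*K_{X'}=K_X$ and $\mu_*\mu^*A=A$, gives $(K_{X'}+\mu^*A)\cdot\mu^*H_1\cdots\mu^*H_{n-1}=(K_X+A)\cdot H_1\cdots H_{n-1}<0$, so $K_{X'}+\mu^*A$ is not generically nef by part b).

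The only genuinely non-formal ingredient is the input used in c) that a proper birational morphism onto a \emph{normal} variety is an isomorphism away from a closed subset of the target of codimension at least two; this is what guarantees that a MR-general curve, avoiding any fixed codimension-two set, may be regarded as a curve in $X'$ along which the relevant canonical sheaves match up. Everything else amounts to bookkeeping with reflexive sheaves and with the multilinearity and projection-formula properties of the intersection pairing, together with the fact that generic nefness can be tested on MR-general curves.
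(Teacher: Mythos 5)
Your proof is correct and is exactly the elementary argument the paper omits (the lemma's proof is explicitly "left to the reader"): identifying $\sF$ with $\sF^{**}$ and $X'$ with $X$ away from codimension two where MR-general curves never go, and perturbing nef classes by $\varepsilon H$ for b). The only non-formal input, that a proper birational morphism onto a normal variety is an isomorphism over the complement of a codimension-two subset, is correctly identified and is standard (Zariski's main theorem plus normality).
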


Let $X$ be a projective variety that is smooth in codimension two, and let $D$ be a $\Q$-divisor on $X$.
Let $S \subset X$ be a surface that is a complete intersection of general very ample divisors. Then $S$ is not contained
$\mbox{Supp}(D)$, so the restriction $D|_S$ is well-defined. Moreover $S$ is smooth, so $D|_S$ is $\Q$-Cartier
and the following definition makes sense. 

\begin{definition} \label{definitionnefcodimone}
Let $X$ be a normal, projective variety of dimension $n$ that is smooth in codimension two, and let $D$ be a $\Q$-divisor on $X$.
We say that $D$ is nef in codimension one if for every collection
$H_1, \ldots, H_{n-2}$ of ample Cartier divisors and
$S \subset X$ a complete intersection
\[
D_1 \cap \ldots \cap D_{n-2}
\]
of general $D_j \in | m_j H_j |$ where $m_j \gg 0$, the restriction $D|_S$ is nef. 
\end{definition}

\begin{lemma}\footnote{This statement seems now a bit too optimistic to me, see 
\url{http://math.unice.fr/\~hoering/articles/remark-bs-conjecture.pdf} for a correction.} \label{lemmamiyaokaQ}
Let $X$ be a normal, projective variety of dimension $n \geq 2$ that is smooth in codimension two.
Let $E$ be a reflexive sheaf over $X$ such that $\det E$ is $\Q$-Cartier,
and $\delta$ a numerical equivalence class in $N^1(X)_\Q$.
If $E\hspace{-0.8ex}<\hspace{-0.8ex}\delta\hspace{-0.8ex}>$ is generically 
nef and $c_1( E\hspace{-0.8ex}<\hspace{-0.8ex}\delta\hspace{-0.8ex}>)$ is nef in codimension one, then
$$
H_1 \cdots H_{n-2} \cdot c_2(E\hspace{-0.8ex}<\hspace{-0.8ex}\delta\hspace{-0.8ex}>) \geq 0,
$$
where $H_1, \ldots, H_{n-2}$ is a collection of ample Cartier divisors on $X$. 
\end{lemma}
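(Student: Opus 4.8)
The plan is to reduce to the classical Miyaoka pseudoeffectivity statement on a smooth projective surface by cutting down with hyperplane sections, using the Mehta–Ramanathan principle to keep control of generic nefness. First I would choose a collection $H_1, \ldots, H_{n-2}$ of ample Cartier divisors and, for $m_j \gg 0$, pick general members $D_j \in |m_j H_j|$ so that $S := D_1 \cap \cdots \cap D_{n-2}$ is a smooth projective surface (Bertini, together with the hypothesis that $X$ is smooth in codimension two) which avoids the locus where $E$ fails to be locally free and is not contained in $\operatorname{Supp}(\det E)$ or in the support of any fixed divisor representing $\delta$. Then $E|_S$ is a genuine vector bundle on the smooth surface $S$, the twisting class $\delta|_S$ makes sense in $N^1(S)_\Q$, and by the projection formula
\[
H_1 \cdots H_{n-2} \cdot c_2(E\hspace{-0.8ex}<\hspace{-0.8ex}\delta\hspace{-0.8ex}>)
= \frac{1}{m_1 \cdots m_{n-2}}\, c_2\big((E\hspace{-0.8ex}<\hspace{-0.8ex}\delta\hspace{-0.8ex}>)|_S\big).
\]
So it suffices to show $c_2((E\hspace{-0.8ex}<\hspace{-0.8ex}\delta\hspace{-0.8ex}>)|_S) \geq 0$.

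Next I would check that the hypotheses descend to $S$. A MR-general curve $C$ in $S$ (intersection of general members of $|k_1 H_1|_S|, \ldots, |k_{n-3} H_{n-3}|_S|$ viewed on $S$, or equivalently a MR-general curve of $X$ lying on $S$) is also a MR-general curve of $X$, so the generic nefness of $E\hspace{-0.8ex}<\hspace{-0.8ex}\delta\hspace{-0.8ex}>$ gives that $(E\hspace{-0.8ex}<\hspace{-0.8ex}\delta\hspace{-0.8ex}>)|_S$ is generically nef on $S$; on a surface this means $(E\hspace{-0.8ex}<\hspace{-0.8ex}\delta\hspace{-0.8ex}>)|_S$ restricted to a general curve in a base-point-free ample linear system is nef. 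Likewise, the assumption that $c_1(E\hspace{-0.8ex}<\hspace{-0.8ex}\delta\hspace{-0.8ex}>)$ is nef in codimension one says precisely (by Definition \ref{definitionnefcodimone}) that $c_1(E\hspace{-0.8ex}<\hspace{-0.8ex}\delta\hspace{-0.8ex}>)|_S$ is nef on $S$. Thus I have reduced to: a rank-$r$ $\Q$-twisted vector bundle $\sE\hspace{-0.8ex}<\hspace{-0.8ex}\delta\hspace{-0.8ex}>$ on a smooth projective surface $S$, generically nef, with $c_1$ nef, has $c_2 \geq 0$.

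For the surface case I would argue as follows. After replacing $\sE\hspace{-0.8ex}<\hspace{-0.8ex}\delta\hspace{-0.8ex}>$ by a high multiple and clearing denominators, write the twisting class as an actual $\Q$-divisor $\delta$; the class $c_2(\sE\hspace{-0.8ex}<\hspace{-0.8ex}\delta\hspace{-0.8ex}>) = c_2(\sE) + (r-1)\delta\cdot c_1(\sE) + \binom{r}{2}\delta^2$ is a continuous, in fact polynomial, function of $\delta$, so it is enough to treat $\delta \in N^1(S)_\Q$; by a further approximation I may assume the data are defined over $\Z$, i.e. there is an honest nef line bundle twist. The generic nefness hypothesis says that for a general smooth curve $B$ in a sufficiently positive base-point-free system, $\sE\hspace{-0.8ex}<\hspace{-0.8ex}\delta\hspace{-0.8ex}>|_B$ is a nef bundle on $B$. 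This is exactly the input of Miyaoka's generic semipositivity argument: one considers the Harder–Narasimhan filtration of $\sE\hspace{-0.8ex}<\hspace{-0.8ex}\delta\hspace{-0.8ex}>$ with respect to a polarisation $H$ on $S$, notes that by Mehta–Ramanathan it restricts to the HN filtration on $B$, so generic nefness forces all the HN slopes (with respect to $H$, suitably normalised) to be $\geq 0$; combined with nefness of $c_1$ one gets that each graded piece $\sQ_i$ has $\mu_{H}(\sQ_i) \geq 0$. Then the Bogomolov inequality applied to each semistable graded piece $\sQ_i$ — namely $2 r_i\, c_2(\sQ_i) \geq (r_i - 1) c_1(\sQ_i)^2$ — together with the Hodge index theorem comparing $c_1(\sQ_i)^2$ with $(c_1(\sQ_i)\cdot H)^2/H^2 \geq 0$, yields $c_2(\sQ_i) \geq 0$ for each $i$. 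Finally, Whitney's formula expresses $c_2(\sE\hspace{-0.8ex}<\hspace{-0.8ex}\delta\hspace{-0.8ex}>)$ as $\sum_i c_2(\sQ_i) + \sum_{i<j} c_1(\sQ_i)\cdot c_1(\sQ_j)$, and each cross term is $\geq 0$ because it is the intersection of two nef classes (each $c_1(\sQ_i)$ is nef: it is generically nef on $S$ with nef total $c_1$, and on a surface a generically nef divisor is nef). Hence $c_2(\sE\hspace{-0.8ex}<\hspace{-0.8ex}\delta\hspace{-0.8ex}>)|_S \geq 0$, which is what we needed.

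The main obstacle I anticipate is the $\Q$-twisted bookkeeping in the surface step: one must make sure the Harder–Narasimhan filtration, Bogomolov inequality and Hodge index estimates all survive the twist and the passage from $\Z$ to $\Q$ coefficients, and in particular that "generically nef + $c_1$ nef" is strong enough to force every HN slope and every $c_1(\sQ_i)$ to sit in the nef cone rather than merely on MR-general curves. This is where one leans on the precise statements in Definition \ref{definitiongenericallynef} and Definition \ref{definitionnefcodimone} and on Lemma \ref{lemmagenericallynef}(1) to pass freely to the bidual, which is locally free on the surface. Everything else (Bertini, projection formula, Whitney sum formula) is routine and I would only sketch it.
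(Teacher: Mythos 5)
Your reduction to a smooth complete-intersection surface $S$ --- Bertini plus smoothness of $X$ in codimension two, restriction of the reflexive sheaf to a genuine vector bundle on $S$, and the projection formula identifying $H_1\cdots H_{n-2}\cdot c_2(E\hspace{-0.8ex}<\hspace{-0.8ex}\delta\hspace{-0.8ex}>)$ with $c_2$ of the restriction up to the factor $m_1\cdots m_{n-2}$ --- is exactly the paper's proof. The paper then stops and quotes the surface-level theorem of Lu and Miyaoka \cite[Thm. 8']{LM97}, which is precisely the statement you reformulate (a generically nef $\Q$-twisted bundle with nef $c_1$ on a smooth projective surface has $c_2\geq 0$). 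Had you done the same, the two arguments would coincide.

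Your attempt to reprove that surface statement, however, contains two genuine gaps. First, the claim that a generically nef divisor on a surface is nef is false: on a surface the condition $D\cdot H\geq 0$ for all ample $H$ characterises the pseudoeffective cone, not the nef cone (a $(-1)$-curve is effective, hence generically nef, yet has negative self-intersection). So the cross terms $c_1(Q_i)\cdot c_1(Q_j)$ in the Whitney formula cannot be handled by declaring each $c_1(Q_i)$ nef. Second, Bogomolov's inequality $2r_i\,c_2(Q_i)\geq (r_i-1)\,c_1(Q_i)^2$ together with the Hodge index theorem does not give $c_2(Q_i)\geq 0$ termwise: Hodge index bounds $c_1(Q_i)^2$ from \emph{above} by $(c_1(Q_i)\cdot H)^2/H^2$, and $c_1(Q_i)^2$ may well be negative, in which case Bogomolov only yields a negative lower bound for $c_2(Q_i)$. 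The actual Miyaoka/Lu--Miyaoka proof is not a termwise positivity statement but a global balancing of the Bogomolov defects of the graded pieces against the cross terms, exploiting the ordering of the slopes and a Hodge-index/cone argument (plus a limiting argument in the polarisation for the $\Q$-twisted case). Either cite \cite[Thm. 8']{LM97} as the paper does, or reproduce that finer argument; as written the surface step does not close.
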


Recall that the isomorphism $A_{n-2}(X_{\nons}) \rightarrow A_{n-2}(X)$ allows to define the second Chern class
$c_2(E\hspace{-0.8ex}<\hspace{-0.8ex}\delta\hspace{-0.8ex}>)$. 

\begin{proof}
By linearity of the intersection form it is sufficient to prove that if $S$ is a complete intersection cut out by
general elements $D_j \in |m_j H_j|$ for $m_j\gg0$, then
\[
D_1 \cdots D_{n-2} \cdot c_2(E\hspace{-0.8ex}<\hspace{-0.8ex}\delta\hspace{-0.8ex}>) 
= c_2(E\hspace{-0.8ex}<\hspace{-0.8ex}\delta\hspace{-0.8ex}>\hspace{-0.8ex}|_S)  \geq 0.
\]
Since $X$ is smooth in codimension two, the surface $S$ is smooth. The reflexive sheaf $E$ being locally free in codimension two,
the restriction $E|_S$ is a vector bundle. Moreover 
$E\hspace{-0.8ex}<\hspace{-0.8ex}\delta\hspace{-0.8ex}>\hspace{-0.8ex}|_S$ is generically nef 
and $c_1( E\hspace{-0.8ex}<\hspace{-0.8ex}\delta\hspace{-0.8ex}>\hspace{-0.8ex}|_S)$ is nef. 
We conclude with \cite[Thm. 8']{LM97}.
\end{proof}

\begin{corollary}\footnote{This statement seems now a bit too optimistic to me, see 
\url{http://math.unice.fr/\~hoering/articles/remark-bs-conjecture.pdf} for a correction.} 
 \label{corollarymiyaokaQ}
Let $X$ be a normal, projective variety of dimension $n \geq 2$ that is smooth in codimension two. 
Let $D$ be a nef $\Q$-Cartier $\Q$-divisor on $X$
such that $\Omega_X\hspace{-0.8ex}<\hspace{-0.8ex}\frac{1}{n}D\hspace{-0.8ex}>$ is generically nef and $K_X+D$ is nef in codimension one. 
Then we have
\[
H_1 \cdots H_{n-2}  \cdot c_2(X) 
\geq - H_1 \cdots H_{n-2}   \cdot (\frac{n-1}{n} K_{X} \cdot D + \frac{n-1}{2n} D^2),
\]
where $H_1, \ldots, H_{n-2}$ is any collection of nef Cartier divisors on $X$. 
\end{corollary}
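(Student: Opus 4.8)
The statement is an immediate consequence of Miyaoka's inequality (Lemma \ref{lemmamiyaokaQ}), applied to the reflexive sheaf $E := \Omega_X^{**}$ twisted by $\delta := \tfrac{1}{n}D$. First I would check that the hypotheses of Lemma \ref{lemmamiyaokaQ} hold. Since $X$ is normal, $\Omega_X^{**}$ is a reflexive sheaf of rank $n$ with $\det \Omega_X^{**} \simeq \sO_X(K_X)$, which is $\Q$-Cartier because $D$ and $K_X+D$ are. By Lemma \ref{lemmagenericallynef}(1) the twist $\Omega_X^{**}\langle\tfrac{1}{n}D\rangle$ is generically nef, since $\Omega_X\langle\tfrac{1}{n}D\rangle$ is so by assumption. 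Finally, the first Chern class of the twist is $c_1(\Omega_X^{**}\langle\tfrac{1}{n}D\rangle) = K_X + D$, which is nef in codimension one by hypothesis.

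Next I would carry out the (routine) Chern class computation for $\Q$-twisted bundles: for a rank $r$ vector bundle $E$ and a class $\delta$ one has $c_1(E\langle\delta\rangle) = c_1(E) + r\delta$ and $c_2(E\langle\delta\rangle) = c_2(E) + (r-1)\,c_1(E)\cdot\delta + \binom{r}{2}\delta^2$, and these identities persist after restriction to the general complete intersection surfaces $S$ appearing in Lemma \ref{lemmamiyaokaQ}. Taking $r = n$, $c_1(\Omega_X^{**}) = K_X$, $c_2(\Omega_X^{**}) = c_2(X)$ (computed, as always, via the isomorphism $A_{n-2}(X_{\nons}) \to A_{n-2}(X)$) and $\delta = \tfrac{1}{n}D$, one obtains $c_1(\Omega_X^{**}\langle\tfrac1n D\rangle) = K_X + D$ as noted and
\[
c_2\!\left(\Omega_X^{**}\langle\tfrac1n D\rangle\right) = c_2(X) + \frac{n-1}{n}\,K_X \cdot D + \frac{n-1}{2n}\,D^2.
\]
Substituting this into the conclusion $H_1 \cdots H_{n-2} \cdot c_2(E\langle\delta\rangle) \geq 0$ of Lemma \ref{lemmamiyaokaQ} and moving the $K_X\cdot D$ and $D^2$ terms to the right-hand side gives exactly the asserted inequality whenever $H_1, \ldots, H_{n-2}$ are ample Cartier divisors.

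It remains to weaken "ample" to "nef", and this is the only mildly delicate point. Fix an ample Cartier divisor $A$; then for each $i$ and every integer $m \geq 1$ the divisor $m H_i + A$ is ample and Cartier, so the inequality already proved applies with each $H_i$ replaced by $m H_i + A$. Dividing through by $m^{n-2}$, expanding both sides by multilinearity of the intersection product, and letting $m \to \infty$, all terms involving $A$ disappear and one recovers the inequality for $H_1, \ldots, H_{n-2}$ nef. Apart from this limiting argument and the codimension-two bookkeeping needed to ensure that $c_2$ and the restrictions $\Omega_X^{**}|_S$ are well behaved, the corollary is a purely formal consequence of Lemma \ref{lemmamiyaokaQ}, so I do not anticipate any real obstacle.
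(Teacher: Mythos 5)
Your proof is correct and follows essentially the same route as the paper: pass to the bidual $\Omega_X^{**}$ via Lemma \ref{lemmagenericallynef}(1) and the equality $c_2(\Omega_X)=c_2(\Omega_X^{**})$ in codimension two, apply Lemma \ref{lemmamiyaokaQ}, compute the twisted Chern classes by \eqref{qc1} and \eqref{qc2}, and reduce the nef case to the ample case (the paper invokes linearity where you give the explicit $mH_i+A$ perturbation, which is the cleaner justification). The only imprecision is your assertion that $K_X$ is $\Q$-Cartier ``because $D$ and $K_X+D$ are'': the hypothesis that $K_X+D$ is nef in codimension one does not make it $\Q$-Cartier (cf.\ Definition \ref{definitionnefcodimone}), though this point is glossed over in the paper as well and is harmless in all applications.
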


We recall that the usual formulas for tensor products of vector bundles extends to $\Q$-vector bundles \cite[Ch.6.2, Ch.8.1]{Laz04b}:
let $X$ be a normal, projective variety that is smooth in codimension two, 
and let $E$ be a coherent sheaf of rank $r$ over $X$.
If $\delta \in N^1(X)_\Q$ is a numerical class, then
\begin{equation} \label{qc1}
c_1(E\hspace{-0.8ex}<\hspace{-0.8ex}\delta\hspace{-0.8ex}>) = c_1(E) + r \delta
\end{equation}
\begin{equation} \label{qc2}
c_2(E\hspace{-0.8ex}<\hspace{-0.8ex}\delta\hspace{-0.8ex}>) = c_2(E) + (r-1) c_1(E) \cdot \delta + \frac{r (r-1)}{2} \delta^2.
\end{equation}

\begin{proof}[Proof of Corollary \ref{corollarymiyaokaQ}]
By  the linearity of the intersection form, 
it is sufficient to show the statement in the case where the Cartier divisors $H_i$ are ample.
Moreover by Lemma \ref{lemmagenericallynef},a) the sheaf $\Omega_X\hspace{-0.8ex}<\hspace{-0.8ex}\frac{1}{n}D\hspace{-0.8ex}>$ is generically nef if and only its bidual $\Omega_X^{**}\hspace{-0.8ex}<\hspace{-0.8ex}\frac{1}{n}D\hspace{-0.8ex}>$ is generically nef.
Since $X$ is smooth in codimension two, we have $c_2(\Omega_X)=c_2(\Omega_X^{**})$.
Therefore Lemma \ref{lemmamiyaokaQ} applies and yields
$$
H_1 \cdots H_{n-2} \cdot c_2(\Omega_X\hspace{-0.8ex}<\hspace{-0.8ex}\frac{1}{n}D\hspace{-0.8ex}>) \geq 0.
$$ 
Since by Formula \eqref{qc2}
$$
c_2(\Omega_X\hspace{-0.8ex}<\hspace{-0.8ex}\frac{1}{n}D\hspace{-0.8ex}>) = c_2(X) + \frac{n-1}{n} K_X \cdot D + \frac{n-1}{2n} D^2,
$$
we get
$$
H_1 \cdots H_{n-2} \cdot c_2(X) \geq - H_1 \cdots H_{n-2} \cdot (\frac{n-1}{n} K_X \cdot D + \frac{n-1}{2n} D^2).
$$
\end{proof}

\subsection{Some technical lemmas}

The following lemma shows that we can reduce the non-vanishing problem to non-singular varieties.

\begin{lemma} \label{lemmareduction}
Let $X$ be a normal, projective variety of dimension $n$, and
let $A$ be a Cartier divisor on $X$. Let \holom{\nu}{X'}{X}
be a desingularisation. Then for all $j \in \Z$ we have an inclusion:
$$
H^0(X',  \sO_{X'}(K_{X'}+j\nu^*A))
\subseteq
H^0(X,  \sO_X(K_X+jA)).
$$ 
\end{lemma}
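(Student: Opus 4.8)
The plan is to exhibit the desired inclusion of cohomology groups by identifying both sides as spaces of global sections of reflexive sheaves on the normal variety $X$ and comparing them via the projection formula. First I would note that, since $\nu$ is a desingularisation, there is a natural inclusion of sheaves $\nu_* \sO_{X'}(K_{X'}) \hookrightarrow \omega_X$, where $\omega_X = \sO_X(K_X)$ is the (reflexive) dualising sheaf of the normal variety $X$; indeed, since $X$ is normal, $K_X$ is a well-defined Weil divisor class and $\sO_X(K_X)$ is reflexive of rank one, while the image of $\nu_*\sO_{X'}(K_{X'})$ in the constant sheaf of rational functions is a subsheaf agreeing with $\sO_X(K_X)$ in codimension one (both restrict to $\det T_{X_{\nons}}$ on the smooth locus, since $\nu$ is an isomorphism there). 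Because $\sO_X(K_X)$ is reflexive and $\nu_*\sO_{X'}(K_{X'})$ is torsion-free agreeing with it outside a codimension-two set, the inclusion $\nu_*\sO_{X'}(K_{X'}) \subseteq \sO_X(K_X)$ follows.

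Next I would tensor with the line bundle $\sO_X(jA)$. Since $A$ is Cartier, $\sO_X(jA)$ is locally free, so tensoring preserves the inclusion and we obtain
\[
\nu_* \sO_{X'}(K_{X'}) \otimes \sO_X(jA) \subseteq \sO_X(K_X) \otimes \sO_X(jA) = \sO_X(K_X+jA).
\]
By the projection formula, the left-hand side equals $\nu_*\big(\sO_{X'}(K_{X'}) \otimes \nu^*\sO_X(jA)\big) = \nu_*\sO_{X'}(K_{X'}+j\nu^*A)$, using that $\nu^*A$ is a Cartier divisor on $X'$ pulling back $A$. Taking global sections of the inclusion of sheaves on $X$, and using $H^0(X', \sF) = H^0(X, \nu_*\sF)$ for any coherent sheaf $\sF$ on $X'$, yields
\[
H^0(X', \sO_{X'}(K_{X'}+j\nu^*A)) = H^0(X, \nu_*\sO_{X'}(K_{X'}+j\nu^*A)) \subseteq H^0(X, \sO_X(K_X+jA)),
\]
which is exactly the claim.

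The only mildly delicate point is the first step, the inclusion $\nu_*\sO_{X'}(K_{X'}) \subseteq \sO_X(K_X)$ of sheaves on $X$. The cleanest way to see it is to restrict to the smooth locus $X_{\nons}$: over it $\nu$ is an isomorphism, so the two sheaves agree there; then one invokes that $\sO_X(K_X)$ is reflexive and that any torsion-free sheaf admitting a map to a reflexive sheaf which is an isomorphism in codimension one factors through it — more precisely, $\nu_*\sO_{X'}(K_{X'})$ is a subsheaf of the constant sheaf $\mathcal{M}_X$ of rational functions (it is torsion-free of rank one), and on $X_{\nons}$ it coincides with $\sO_X(K_X)$, hence as subsheaves of $\mathcal{M}_X$ we have $\nu_*\sO_{X'}(K_{X'}) \subseteq j_*\big(\sO_X(K_X)|_{X_{\nons}}\big) = \sO_X(K_X)$, the last equality because $\sO_X(K_X)$ is reflexive and $X\setminus X_{\nons}$ has codimension at least two. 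I expect no real obstacle beyond this bookkeeping with reflexive sheaves; note in particular that no hypothesis on the singularities of $X$ (rational or otherwise) is needed for this lemma, which is why it can serve as the general reduction step.
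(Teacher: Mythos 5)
Your argument is correct and is essentially the paper's own proof: both use the projection formula to rewrite $\nu_*\sO_{X'}(K_{X'}+j\nu^*A)$ as $\nu_*\sO_{X'}(K_{X'})\otimes\sO_X(jA)$, and both obtain the inclusion into $\sO_X(K_X+jA)$ from the facts that the sheaves agree on $X_{\nons}$ and that $\sO_X(K_X+jA)$ is reflexive (the paper passes through the bidual of the tensor product, you compare subsheaves of $\mathcal{M}_X$ before tensoring — an immaterial reordering). No gap.
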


\begin{proof}
Since $A$ is Cartier, the projection formula yields
$$
\nu_* \sO_{X'}(K_{X'}+j\nu^*A) \simeq \nu_* \sO_{X'}(K_{X'}) \otimes \sO_X(jA).
$$
Note that since  $\nu_* \sO_{X'}(K_{X'}) \otimes \sO_X(jA)$ is torsion-free we have an inclusion
\[
\nu_* \sO_{X'}(K_{X'}) \otimes \sO_X(jA) \hookrightarrow (\nu_* \sO_{X'}(K_{X'}) \otimes \sO_X(jA))^{**}.
\] 
Moreover for any reflexive sheaf $\sF$ on a normal variety we have
\[
j_* (\sF|_{X_{\nons}}) \simeq \sF
\]
where $j: X_{\nons} \hookrightarrow X$ is the inclusion.
Thus $(\nu_* \sO_{X'}(K_{X'}) \otimes \sO_X(jA))^{**}$ and $\sO_X(K_X+jA)$ are isomorphic since
they coincide on $X_{\nons}$ and we get an inclusion
$$
\nu_* \sO_{X'}(K_{X'}+j\nu^*A) \hookrightarrow \sO_X(K_X+jA).
$$
\end{proof}

\begin{proposition} \label{propositionbasic}
Let $X$ be a normal, projective variety of dimension $n$, and
let $A$ be a nef and big Cartier divisor on $X$. 
Then the following holds:
\begin{enumerate}
\item There exists a $j \in \{ 1, \ldots, n+1 \}$ such that 
$H^0(X,  \sO_X(K_X+jA)) \neq 0$. In particular the divisor $K_X+(n+1)A$ is generically nef.
\item If $(K_X+nA) \cdot A^{n-1} \geq 0$,
there exists a $j \in \{ 1, \ldots, n \}$ such that $H^0(X, \sO_X(K_X+jA)) \neq 0$.
\item If $(K_X+nA) \cdot A^{n-1} < 0$,
there exists a birational morphism \holom{\tau}{X}{\PP^n} such that $\sO_X(A) \simeq \tau^* \sO_{\PP^n}(1)$.  
\end{enumerate}
\end{proposition}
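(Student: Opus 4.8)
The three statements are linked by the Riemann--Roch computation \eqref{RR}, so I would organise the proof around the Hilbert polynomial $P(t) := \chi(X, \sO_X(K_X+tA))$. Since $A$ is nef and big, Kawamata--Viehweg vanishing (valid after pulling back to a desingularisation and pushing forward, using Lemma \ref{lemmareduction} to descend the sections) gives $h^0(X, \sO_X(K_X+tA)) = P(t)$ for all $t \geq 1$. For part (a), the plan is to argue by contradiction: if $P(1) = \cdots = P(n+1) = 0$, then the degree-$n$ polynomial $P$ has $n+1$ distinct roots, hence $P \equiv 0$; but the leading coefficient of $P$ is $A^n/n! > 0$ since $A$ is big, a contradiction. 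This yields a $j \in \{1, \ldots, n+1\}$ with $H^0(X, \sO_X(K_X+jA)) \neq 0$; taking such a section shows $K_X+jA$ is effective, hence generically nef, and then $K_X+(n+1)A = (K_X+jA) + (n+1-j)A$ is a sum of a generically nef divisor and a nef divisor, so it is generically nef.

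For parts (b) and (c), I would first pass to a resolution \holom{\nu}{X'}{X} and set $A' := \nu^* A$, which is nef and big on $X'$; by Lemma \ref{lemmareduction} it suffices to produce sections of $K_{X'}+jA'$, and $(K_{X'}+nA')\cdot A'^{n-1} = (K_X+nA)\cdot A^{n-1}$ by the projection formula (and similarly for the morphism to $\PP^n$ in (c), since a birational $\tau: X' \to \PP^n$ with $\sO_{X'}(A') \simeq \tau^*\sO(1)$ forces, via Lemma \ref{lemmareduction} or directly, $A$ itself to be the pullback of $\sO(1)$ along the induced map — here I would need to check $\tau$ descends, which it does because $A$ being Cartier and $A' = \nu^*A$ means the linear system $|A'|$ is pulled back from $|A|$). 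So assume $X$ smooth. Now run the same Riemann--Roch argument, but more carefully: supposing $H^0(X,\sO_X(K_X+jA)) = 0$ for $j = 1, \ldots, n$, the polynomial $P(t)$ of degree $n$ has the $n$ roots $1, \ldots, n$, so $P(t) = \frac{A^n}{n!}(t-1)(t-2)\cdots(t-n)$. Evaluating the known coefficient relations — in particular comparing the coefficient of $t^{n-1}$ — one computes that $A^{n-1}\cdot K_X / (2(n-1)!)$ equals $\frac{A^n}{n!} \cdot (-(1+2+\cdots+n)) = -\frac{A^n}{n!}\cdot \frac{n(n+1)}{2}$, which rearranges to $(K_X + nA)\cdot A^{n-1} = -A^n < 0$. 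Thus if $(K_X+nA)\cdot A^{n-1} \geq 0$ we get a contradiction, proving (b).

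For part (c) we are therefore in the situation $(K_X+nA)\cdot A^{n-1} = -A^n < 0$, i.e.\ $(K_X + (n+1)A)\cdot A^{n-1} < 0$ as well — but more to the point $-(K_X+nA)$ is positive against the covering family obtained from complete intersections of members of $|mA|$. The classical adjunction/Fujita-type result is that for $X$ smooth projective and $A$ nef and big, if $K_X + nA$ fails to be nef then $(X,A)$ is $(\PP^n, \sO(1))$ up to the obvious birational modification; more precisely one invokes the Kawamata rationality theorem and the base-point-free theorem to the nef and big divisor $A$ together with $K_X+nA$ not nef, to conclude that there is a contraction of an extremal ray on which $A$ is positive, and iterating (or directly citing Ionescu--Fujita \cite{Ion86, Fuj87}) one obtains the birational morphism $\tau: X \to \PP^n$ with $A = \tau^*\sO_{\PP^n}(1)$. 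I would cite \cite{Ion86, Fuj87} for this statement in the nef-and-big generality and deduce (c). The main obstacle is precisely this last step: the clean classification statement ``$K_X + nA$ not nef $\Rightarrow (X,A)$ birationally $(\PP^n,\sO(1))$'' is standard for $A$ ample but requires care for $A$ merely nef and big (one must handle the locus where $A$ is not ample and verify the contraction is birational onto $\PP^n$ rather than a lower-dimensional base), so the bulk of the work in (c) is reducing to, and correctly quoting, the Ionescu--Fujita adjunction theorem.
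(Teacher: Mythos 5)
Your proposal is correct and follows the same overall route as the paper: reduce to a desingularisation via Lemma \ref{lemmareduction} and the projection formula $\nu_* K_{X'} = K_X$, then exploit Kawamata--Viehweg vanishing to identify $h^0$ with the Hilbert polynomial. The difference is mostly one of packaging: the paper simply cites \cite[Prop.9.4.23]{Laz04b} for a) and \cite[Thm.2.2]{Fuj89} for b) and c), whereas you unpack the underlying computations --- your Hilbert-polynomial argument for a) is exactly the proof of the Lazarsfeld proposition, and your coefficient comparison showing that vanishing of all $n$ adjoint sections forces $(K_X+nA)\cdot A^{n-1} = -A^n < 0$ is a clean, self-contained proof of b). The one place you leave a hole is the classification step in c): the references \cite{Ion86, Fuj87} you propose treat $A$ ample, and you rightly flag that the nef-and-big case needs care; this is precisely what Fujita's paper on quasi-polarized varieties supplies (\cite[Thm.2.2]{Fuj89}, the reference the paper actually uses), so with that citation your argument closes. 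The descent of $\tau$ from the resolution $X'$ to $X$, which you also flag, follows by rigidity: every $\nu$-exceptional curve has degree zero against $\nu^* A = \tau'^* \sO_{\PP^n}(1)$, so $\tau'$ is constant on the connected fibres of $\nu$ and factors through it, and $\sO_X(A) \simeq \tau^* \sO_{\PP^n}(1)$ since $\nu^*$ is injective on Picard groups.
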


\begin{proof}
By Lemma \ref{lemmareduction} statement a) follows from \cite[Prop.9.4.23]{Laz04b}.

b)+c) Let \holom{\nu}{X'}{X} be a desingularisation. We have $\nu_* K_{X'} = K_X$, so by the
projection formula 
\[
(K_X+nA) \cdot A^{n-1} = (K_{X'}+n\nu^*A) \cdot \nu^* A^{n-1}
\]
Thus the condition lifts to $X'$ and we conclude by Lemma \ref{lemmareduction} and \cite[Thm.2.2]{Fuj89}.
\end{proof}

The following basic fact is well-known to experts. For the convenience of the reader we include
a proof.

\begin{lemma} \label{lemmadirectimage}
Let \holom{\varphi}{X}{Y} be a fibration between projective manifolds $X$ and $Y$, and let $A$
nef and big $\Q$-Cartier $\Q$-divisor on $X$.
Suppose that for a general fibre $F$ one has
\[
H^0(F, \sO_F(D)) \neq 0,
\]
where $D$ is a Cartier divisor on $F$ such that $D \sim_\Q K_F+A|_F$.
Then $K_{X/Y}+A$ is pseudoeffective.
\end{lemma}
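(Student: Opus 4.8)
The plan is to reduce the pseudoeffectivity of $K_{X/Y}+A$ to a statement about the direct image sheaf and its restriction to curves, using the characterization of pseudoeffective divisors via covering families of curves (the Boucksom--Demailly--P\u{a}un--Peternell criterion cited in the excerpt). First I would observe that since $A$ is $\Q$-Cartier, after passing to a sufficiently divisible multiple $m$ we may assume $mA$ is Cartier, $m(K_F+A|_F) \sim m D$ is Cartier on the general fibre, and $H^0(F,\sO_F(mD)) \neq 0$; since being pseudoeffective is insensitive to positive rational multiples, it suffices to show $m(K_{X/Y}+A) = K_{X/Y}^{(m)} + mA$ (suitably interpreted) is pseudoeffective, so I will work with an honest Cartier divisor. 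Then I would replace $K_{X/Y}+A$ by a relatively effective divisor: the hypothesis says the restriction of $\sO_X(m(K_{X/Y}+A)) \otimes \varphi^*(\text{something})$ — more precisely of $\sO_X(mK_{X/Y}+mA)$ twisted so as to land in $K_F+A|_F$ on fibres — has a nonzero section on the general fibre.

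The key step is to push forward. Consider $\varphi_* \sO_X(mK_{X/Y}+mA)$. Relative duality / the hypothesis $H^0(F,\sO_F(mD))\neq 0$ (together with $mD \sim_\Q K_F + A|_F$ and base change for the general fibre) shows this sheaf is nonzero on a dense open $Y_0 \subset Y$; in fact it is generically globally generated in the fibre direction in the weak sense that $h^0(F,\cdot)>0$. Now I would argue that it suffices to prove: for a covering family of curves $(C_t)$ on $X$, one has $(K_{X/Y}+A)\cdot C_t \geq 0$. Decompose such a curve: the generic member of a covering family either dominates $Y$ or is contained in a fibre. If $C_t$ is contained in a general fibre $F$, then $(K_{X/Y}+A)\cdot C_t = (K_F + A|_F)\cdot C_t$, and since $K_F+A|_F \sim_\Q D$ with $D$ effective (as $H^0(F,\sO_F(D))\neq 0$) and $C_t$ moves in a covering family of $F$, hence is not contained in $\mathrm{Supp}(D)$, we get $(K_F+A|_F)\cdot C_t \geq 0$. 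If $C_t$ dominates $Y$, let $B = \varphi(C_t) \subseteq Y$ be its image and perform the standard construction: normalize, base change to $B$ (or its normalization), and reduce to the case where $\varphi$ has a section or at least a multisection along which one can run the argument; alternatively, use that $\varphi_* \sO_X(m(K_{X/Y}+A))$ is a nonzero sheaf on $Y$ and that for a general such $C_t$ one can pull back a section. The cleanest route is: since being pseudoeffective can be checked after pulling back under a generically finite base change $Y' \to Y$ (pseudoeffectivity descends), and after such a base change plus a desingularization we may assume $\varphi$ admits a multisection meeting the general fibre transversally at points avoiding $\mathrm{Supp}(D)$; then positivity of $(K_{X/Y}+A)$ on curves dominating the base follows from the fibrewise effectivity combined with $K_{X/Y}$ restricting to $K_F$ on fibres.

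Actually the slickest argument I would use avoids curves entirely and goes through the weak positivity of direct images: by Viehweg's / Koll\'ar's theory (or by the hypothesis directly), the existence of a section of $\sO_F(D)$ on the general fibre with $D \sim_\Q K_F + A|_F$ means $\varphi_* \sO_X(m(K_{X/Y}+A))$ (for suitable $m$) is a torsion-free sheaf of positive rank that is \emph{weakly positive} — in fact here we only need it to be nonzero, because then it has a nonzero section after twisting by an ample on $Y$, i.e. $H^0(X, \sO_X(m(K_{X/Y}+A)) \otimes \varphi^* \sO_Y(mH)) \neq 0$ for $H$ ample on $Y$, whence $m(K_{X/Y}+A) + m\varphi^* H$ is effective, hence pseudoeffective, for every ample $H$; letting $H$ tend to zero in $N^1(Y)_\R$ and pulling back, $m(K_{X/Y}+A)$ lies in the closure of the pseudoeffective cone, so $K_{X/Y}+A$ is pseudoeffective. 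The one gap to fill carefully is that $\varphi_* \sO_X(m(K_{X/Y}+A))$ is genuinely nonzero, which requires knowing $H^0(F, \sO_F(mD)) \neq 0$ for the \emph{general} fibre and that the formation of this $H^0$ is compatible with restriction for general $F$ — this follows from generic flatness and semicontinuity applied to $\varphi_* \sO_X(m(K_{X/Y}+A))$ together with the fact that $mD$ and $m(K_{X/Y}+A)|_F$ differ by a divisor pulled back from a point, hence are linearly equivalent on the fibre up to the chosen multiple.

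The main obstacle I anticipate is the bookkeeping around the $\Q$-coefficients and the precise identification of $\sO_X(m(K_{X/Y}+A))|_F$ with $\sO_F(mD)$: $K_{X/Y}$ is a genuine divisor since $X,Y$ are manifolds, and $K_{X/Y}|_F = K_F$, so $(K_{X/Y}+A)|_F \sim_\Q K_F + A|_F \sim_\Q D$; the only subtlety is choosing $m$ divisible enough that $mA$, $mD$, and $m(K_{X/Y}+A)$ are all Cartier and the $\Q$-linear equivalences become honest linear equivalences on the general fibre, after which the pushforward/section-twisting argument is routine. A secondary point requiring a line of justification is that pseudoeffectivity is a closed condition, so obtaining $m(K_{X/Y}+A)+m\varphi^*H$ effective for all ample $H$ and then taking a limit is legitimate.
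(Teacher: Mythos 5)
Your final, ``slickest'' strand is in outline the paper's own proof: the hypothesis makes $\varphi_*\sO_X(m(K_{X/Y}+A))$ a nonzero torsion-free sheaf, the semipositivity theorems of Viehweg, Campana and Berndtsson--P{\u a}un (applicable because $mA$ is nef and big) make it weakly positive, and weak positivity passes to the rank-one sheaf $\sO_X(m(K_{X/Y}+A))$ via the generically surjective evaluation map $\varphi^*\varphi_*\sO_X(m(K_{X/Y}+A))\rightarrow \sO_X(m(K_{X/Y}+A))$, whence pseudoeffectivity. However, your parenthetical claim that ``we only need it to be nonzero, because then it has a nonzero section after twisting by an ample on $Y$'' is false and cannot carry the argument: a nonzero torsion-free sheaf twisted by an ample line bundle need not acquire sections (already $\sO_{\PP^1}(-k)\otimes\sO_{\PP^1}(1)$ for $k\geq 2$ has none), and without an a priori positivity statement there is no bound on how negative a direct image can be. Weak positivity is precisely the nontrivial input: it is what produces sections of (symmetric powers of) the direct image after twisting by an \emph{arbitrarily small} ample, and it is exactly the step your aside proposes to discard. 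If you keep the weak positivity and then run your twist-by-$\varepsilon H$ and take-the-limit argument (or, as in the paper, transfer weak positivity to the line bundle itself), the proof closes.

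The same gap resurfaces in your curve-theoretic strand. For covering families of curves dominating $Y$ you assert that nonnegativity of $(K_{X/Y}+A)\cdot C_t$ ``follows from the fibrewise effectivity combined with $K_{X/Y}$ restricting to $K_F$ on fibres,'' but no such formal implication exists: the restriction of $K_{X/Y}$ to a fibre says nothing about its degree on horizontal curves (on a Hirzebruch surface $\PF_k\rightarrow\PP^1$ the relative canonical divisor has degree $-k$ on the positive section, even though it restricts to $K_{\PP^1}$ on every fibre). Controlling $K_{X/Y}+A$ on multisections is again the content of the semipositivity package, not a consequence of base change and effectivity on fibres. Your bookkeeping points (choice of divisible $m$, identification of $\sO_X(m(K_{X/Y}+A))|_F$ with $\sO_F(mD)$ for general $F$, closedness of the pseudoeffective cone) are fine.
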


\begin{proof}
It is sufficient to show that $m(K_{X/Y}+A)$ is pseudoeffective for $m \gg 0$ sufficiently divisible
so we choose $m \in \N$ such that $mA$ is Cartier and $H^0(F, \sO_F(m K_F+mA|_F)) \neq 0$.
Hence the direct image sheaf $\varphi_* (\sO_X(m(K_{X/Y}+A)))$ is not zero.
The Cartier divisor $mA$ is nef and big, 
so it follows from \cite[Ch.2]{Vie95}, \cite[Thm.4.13]{Cam04b}, \cite[Thm.0.2]{BP08} that
$\varphi_* (\sO_X(m(K_{X/Y}+A)))$ is weakly positive in the sense of Viehweg.
Since $\sO_X(m(K_{X/Y}+A))$ has rank one, the canonical morphism
\[
\varphi^* \varphi_* (\sO_X(m(K_{X/Y}+A))) \rightarrow \sO_X(m(K_{X/Y}+A))
\]
is generically surjective, so $\sO_X(m(K_{X/Y}+A))$ is also weakly positive.
Thus the
divisor $m(K_{X/Y}+A)$ is pseudoeffective. 
\end{proof}

If $A$ is a Cartier divisor we can combine 
Lemma \ref{lemmadirectimage} and Proposition \ref{propositionbasic} to obtain:

\begin{proposition} \label{propositiondirectimage}
Let $X$ be a projective manifold of dimension $n$.
Let  \holom{\mu}{X'}{X} and  \holom{\varphi}{X'}{Y} be a model of the MRC-fibration 
(cf. Theorem \ref{theoremMRC}), and denote by $m$ the dimension of $Y$. 
Let $A$ be a nef and big Cartier divisor on $X$.
Then 
$$
K_{X'/Y}+(n-m+1)\mu^* A
$$ 
is pseudoeffective. 
If 
$$
K_{X'/Y}+(n-m) \mu^* A
$$ 
is not pseudoeffective, the general $\varphi$-fibre $F$ admits 
a birational morphism \holom{\tau}{F}{\PP^{n-m}} such that  $\sO_F(A) \simeq \tau^* \sO_{\PP^{n-m}}(1)$.

In particular $K_{X}+(n-m+1)A$ is pseudoeffective.  If 
$K_{X}+(n-m)A$ is not pseudoeffective, the manifold $F$ admits 
a birational morphism \holom{\tau}{F}{\PP^{n-m}} such that $\sO_F(A) \simeq \tau^* \sO_{\PP^{n-m}}(1)$.
\end{proposition}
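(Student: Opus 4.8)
The plan is to deduce everything from the geometry of a general fibre $F$ of $\varphi$, by feeding the fibrewise non-vanishing statements of Proposition \ref{propositionbasic} into the positivity statement for direct images in Lemma \ref{lemmadirectimage}. We may assume $X$ is uniruled (otherwise $Y=X$, $F$ is a point, and both assertions are immediate from pseudoeffectivity of $K_X$ by \cite{BDPP04} together with $K_{X'/Y}=\mathrm{Exc}(\mu)$ being effective), so $m\leq n-1$ and $F$ is a rationally connected projective manifold of dimension $n-m\geq 1$ with $K_{X'/Y}|_F\simeq K_F$. Before anything else I would verify that $\mu^*A|_F$ is a nef and big Cartier divisor on $F$: nefness and Cartier-ness are immediate, and for bigness I would invoke Kodaira's lemma to write $\mu^*A\sim_\Q H+E$ with $H$ an ample $\Q$-divisor and $E$ effective, and note that for general $F$ no component of $E$ contains $F$ (a component of $E$ dominating $Y$ meets $F$ in a proper subvariety of $F$, and one not dominating $Y$ is disjoint from $F$), so $E|_F$ is effective and $\mu^*A|_F\sim_\Q H|_F+E|_F$ is big. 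Then Proposition \ref{propositionbasic} applies to the pair $(F,\mu^*A|_F)$; throughout, the symbol $A$ restricted to $F$ is understood as $\mu^*A|_F$.

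For the first assertion I would argue as follows. Proposition \ref{propositionbasic}(a) supplies some $j\in\{1,\dots,n-m+1\}$ with $H^0(F,\sO_F(K_F+j\mu^*A|_F))\neq 0$. Since $j\mu^*A$ is a nef and big Cartier divisor on $X'$ and $K_F+j\mu^*A|_F\sim_\Q K_F+(j\mu^*A)|_F$, Lemma \ref{lemmadirectimage} gives that $K_{X'/Y}+j\mu^*A$ is pseudoeffective, and adding the nef class $(n-m+1-j)\mu^*A$ yields pseudoeffectivity of $K_{X'/Y}+(n-m+1)\mu^*A$. For the second assertion, assume that $K_{X'/Y}+(n-m)\mu^*A$ is not pseudoeffective. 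Then the same reasoning forces $H^0(F,\sO_F(K_F+j\mu^*A|_F))=0$ for all $j\in\{1,\dots,n-m\}$: a non-zero such group, together with Lemma \ref{lemmadirectimage} and the nefness of $\mu^*A$, would make $K_{X'/Y}+(n-m)\mu^*A$ pseudoeffective. By the contrapositive of Proposition \ref{propositionbasic}(b) this gives $(K_F+(n-m)\mu^*A|_F)\cdot(\mu^*A|_F)^{n-m-1}<0$, and Proposition \ref{propositionbasic}(c) then produces a birational morphism \holom{\tau}{F}{\PP^{n-m}} with $\sO_F(\mu^*A|_F)\simeq\tau^*\sO_{\PP^{n-m}}(1)$, which is the desired conclusion.

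For the ``in particular'' part I would descend along $\mu$. Since $Y$ is not uniruled, $K_Y$ is pseudoeffective by \cite{BDPP04}, and hence so is $\varphi^*K_Y$: on a covering family of curves of $X'$ whose general member $C$ is not $\varphi$-contracted one has $\varphi^*K_Y\cdot C=K_Y\cdot\varphi_*C\geq 0$, while $\varphi^*K_Y$ vanishes on $\varphi$-contracted curves. Therefore $K_{X'}+(n-m+1)\mu^*A=(K_{X'/Y}+(n-m+1)\mu^*A)+\varphi^*K_Y$ is a sum of pseudoeffective classes, hence pseudoeffective on $X'$. Pushing forward along $\mu$ — using $\mu_*K_{X'}=K_X$, the projection formula, and the fact that $\mu_*$ preserves pseudoeffectivity because a general member of a covering family of curves on $X$ avoids the codimension $\geq 2$ set $\mu(\mathrm{Exc}(\mu))$ (here smoothness of $X$ enters) and lifts isomorphically to $X'$ — we conclude that $K_X+(n-m+1)A$ is pseudoeffective. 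For the last clause: if $K_X+(n-m)A$ is not pseudoeffective, then neither is $K_{X'}+(n-m)\mu^*A$ (otherwise push it forward), hence neither is $K_{X'/Y}+(n-m)\mu^*A$ (it plus the pseudoeffective class $\varphi^*K_Y$ would otherwise be pseudoeffective), and the case settled above applies to $F$.

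The step I expect to cause the most trouble is not any single computation but the careful handling of bigness and pseudoeffectivity under the two auxiliary maps $\varphi$ and $\mu$: one must make sure that $\mu^*A|_F$ is genuinely \emph{big}, not merely nef, on the general fibre (otherwise Proposition \ref{propositionbasic} cannot be invoked), and that pushforward along the birational morphism $\mu$ does not destroy pseudoeffectivity. Both are handled above — Kodaira's lemma for the first, and the codimension $\geq 2$ of $\mu(\mathrm{Exc}(\mu))$ (a consequence of $X$ being smooth) for the second — but these are precisely the places where a careless argument would break down.
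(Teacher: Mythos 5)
Your proof is correct and follows exactly the route the paper intends: the paper derives this proposition by combining Proposition \ref{propositionbasic} on the general fibre with Lemma \ref{lemmadirectimage}, and obtains the ``in particular'' part from the pseudoeffectivity of $K_Y$ via \cite{BDPP04}. Your additional verifications (bigness of $\mu^*A|_F$ on the general fibre, preservation of pseudoeffectivity under $\varphi^*$ and $\mu_*$) are exactly the standard details the paper leaves implicit.
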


The second statement of the proposition is a consequence of the first and the 
fundamental result due to Boucksom, Demailly, P{\u a}un and Peternell \cite[Cor.0.3]{BDPP04} on the
pseudoeffectiveness of the canonical bundle of a non-uniruled, projective manifold.

\section{The cotangent sheaf of uniruled varieties}

Theorem \ref{theoremgenericnefCartier} will be a consequence of the following statement.

\begin{theorem}\label{theoremgenericnef}
Let $X$ be a normal, projective variety of dimension $n$.
Let $A$ be a nef and big $\Q$-Cartier $\Q$-divisor on $X$.
Then the $\Q$-twisted sheaf $\Omega_X\hspace{-0.8ex}<\hspace{-0.8ex}A\hspace{-0.8ex}>$ is generically nef (cf. Definition \ref{definitiongenericallynef}) unless there exists
a birational morphism \holom{\mu}{X'}{X} from a projective manifold $X'$
and a fibration \holom{\varphi}{X'}{Y} onto a projective manifold $Y$ of dimension $m<n$
such that the general fibre $F$ is rationally connected and 
\[
H^0(F, \sO_F(D)) = 0
\]
where $D$ is any Cartier divisor on $F$ such that $D \sim_\Q K_F+j \mu^*A|_F$ with $j \in [0, n-m] \cap \Q$.
\end{theorem}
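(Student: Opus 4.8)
The plan is to argue by contradiction: assuming $\Omega_X\hspace{-0.8ex}<\hspace{-0.8ex}A\hspace{-0.8ex}>$ is not generically nef, I will produce the fibration $\varphi$ of the statement together with the asserted vanishing on its general fibre. First I would reduce to $X$ smooth by replacing $X$ with a resolution; this is harmless, since (non-)generic nefness of $\Omega\hspace{-0.8ex}<\hspace{-0.8ex}A\hspace{-0.8ex}>$ is detected on MR-general curves, which avoid the exceptional locus, and the exceptional conclusion transfers back along a birational morphism. Fix ample divisors $H_1,\dots,H_{n-1}$, put $\alpha:=H_1\cdots H_{n-1}$, and note $A\cdot\alpha>0$ by Kodaira's lemma. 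Non-generic-nefness of $\Omega_X\hspace{-0.8ex}<\hspace{-0.8ex}A\hspace{-0.8ex}>$ means $\mu_{\max}(T_X|_C)>A\cdot\alpha$ for a MR-general curve $C$. Let $\sF\subset T_X$ be the maximal destabilising subsheaf for the slope $\mu_\alpha$; by Mehta--Ramanathan $\sF|_C$ is the maximal destabilising subsheaf of $T_X|_C$, so $\mu_\alpha(\sF)>A\cdot\alpha>0$ and $\sF\hspace{-0.8ex}<\hspace{-0.8ex}-A\hspace{-0.8ex}>|_C$ is a semistable bundle of positive slope, hence ample. Set $r:=\rk\sF$.

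I would then check that $\sF$ is a foliation: for $r\geq2$ the $\O_X$-linear integrability obstruction $\bigwedge^2\sF\to T_X/\sF$ vanishes, because in characteristic zero $\bigwedge^2\sF$ is $\mu_\alpha$-semistable with $\mu_\alpha(\bigwedge^2\sF)=2\mu_\alpha(\sF)>\mu_\alpha(\sF)>\mu_{\max}(T_X/\sF)$ — the first inequality uses $\mu_\alpha(\sF)>0$ and the second the maximality of $\sF$ — so restricting to a MR-general curve kills every such map (the case $r=1$ being trivial). Since $\sF|_C$ is ample, the foliated Mori theory of Miyaoka and Bogomolov--McQuillan shows that the leaves of $\sF$ are algebraic with rationally connected closures; forming the family of leaf-closures and resolving gives a birational morphism $\mu\colon X'\to X$ and a fibration $\varphi\colon X'\to Y$ whose general fibre $F$ is rationally connected of dimension $r$, so $m=\dim Y=n-r$, with $T_{X'/Y}$ equal to the saturation of $\mu^{-1}\sF$. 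If $r=n$, then $\sF=T_X$ is $\mu_\alpha$-semistable with $(-K_X-nA)\cdot\alpha>0$ for all ample $\alpha$, forcing $X\simeq\PP^n$ and $A\equiv\O_{\PP^n}(1)$ by the Fujita--Ionescu classification; then $Y$ is a point and $H^0(\PP^n,\O(K+jA))=0$ for $j\in[0,n]\cap\Q$. In all cases $m<n$.

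It remains to prove $H^0(F,\O_F(D))=0$ when $D\sim_\Q K_F+j\mu^*A|_F$ with $j\in[0,n-m]\cap\Q$. For $j=0$ this is the rational connectedness of $F$. For $j>0$ I would argue by contradiction: if $H^0(F,\O_F(D))\neq0$, then Lemma \ref{lemmadirectimage}, applied to $\varphi$ and the nef and big $\Q$-divisor $j\mu^*A$, shows $K_{X'/Y}+j\mu^*A$ is pseudoeffective, whence so is $K_{X'/Y}+(n-m)\mu^*A$ after adding the nef class $(n-m-j)\mu^*A$. On the other hand, transporting the generic ampleness of $\sF\hspace{-0.8ex}<\hspace{-0.8ex}-A\hspace{-0.8ex}>$ along $\mu$ and using that $T_{X'/Y}$ is the saturation of $\mu^{-1}\sF$, the bundle $T_{X'/Y}\hspace{-0.8ex}<\hspace{-0.8ex}-\mu^*A\hspace{-0.8ex}>$ is generically ample on $X'$; restricting to the general fibre, where $T_{X'/Y}|_F=T_F$, and passing to determinants, $-(K_F+(n-m)\mu^*A|_F)$ is strictly positive on every MR-general curve of $F$, so $K_F+j\mu^*A|_F$ meets such curves negatively for each $j\leq n-m$, contradicting the effectivity of $D$.

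The main difficulty is precisely the vanishing just discussed, where two points require care: that the generic positivity of the foliation survives restriction to the general fibre $F$ (not a complete intersection, so Mehta--Ramanathan does not apply verbatim), and that the effective discrepancy $c_1(T_{X'/Y})+K_{X'/Y}$ produced by multiple fibres of $\varphi$ does not spoil the inequality. I would handle these by working directly with the closure $\bar L$ of a \emph{general} leaf and foliation adjunction — comparing $K_{\bar L}$ with $(-c_1(\sF))|_{\bar L}$, the singular locus of $\sF$ and the boundary $\bar L\setminus L$ contributing an anti-effective correction for a general leaf — together with the \emph{strictness} of $\mu_\alpha(\sF)>A\cdot\alpha$, which also rules out the borderline case in which $K_F+(n-m)\mu^*A|_F$ becomes numerically trivial (so $F$ would be of Fano type with $T_F\hspace{-0.8ex}<\hspace{-0.8ex}-\mu^*A|_F\hspace{-0.8ex}>$ generically numerically flat, incompatible with strict generic ampleness of $\sF\hspace{-0.8ex}<\hspace{-0.8ex}-A\hspace{-0.8ex}>$).
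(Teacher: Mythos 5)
Your construction of the foliation and of the fibration tracks the paper's proof closely: the Harder--Narasimhan filtration, the inequality $\mu(\sF_1\hspace{-0.8ex}<\hspace{-0.8ex}-A\hspace{-0.8ex}>)>0$ on a MR-general curve $C$, integrability of the maximal destabilising subsheaf, and the Bogomolov--McQuillan/Kebekus--Sol\'a Conde--Toma theorem all appear in the same way. The gap is in the final step, precisely at the two points you flag yourself, and your proposed repair does not fill it. Your contradiction needs $K_F+j\mu^*A|_F$ to be negative on curves \emph{inside} the general fibre $F$, but all the positivity you have established for $\sF\hspace{-0.8ex}<\hspace{-0.8ex}-A\hspace{-0.8ex}>$ lives on the curve $C$, which is a multisection of $\varphi$, transverse to the leaves. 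Generic nefness and generic ampleness are defined via complete-intersection curves of the ambient variety; a fibre is not a complete intersection, Mehta--Ramanathan says nothing about it, and there is no implication from ``generically ample on $X'$'' to ``generically ample after restriction to $F$'' (the paper's example $X=Y\times\PP^{n-m}$ in the introduction shows how badly restriction to fibres can behave). The sketch of ``foliation adjunction on the closure $\bar L$ of a general leaf with an anti-effective correction'' is not a proof: even granting a comparison of $K_{\bar L}$ with $c_1(\sF)|_{\bar L}$, the only sign you control is that of $(c_1(\sF)-(n-m)A)\cdot C$, which constrains nothing about curves lying in $\bar L$. Likewise the multiple-fibre discrepancy works against you: $K_{X'/Y}=-\det T_{X'/Y}+E$ with $E$ effective and supported on multiple fibres, so $(-\det T_{X'/Y}+(n-m)\mu^*A)\cdot C<0$ does not give $(K_{X'/Y}+(n-m)\mu^*A)\cdot C<0$.

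The paper resolves both problems with one idea that is absent from your proposal: base change to $C$ itself. One forms the normalisation $X_C$ of $X'\times_Y C$, which carries a canonical section $s(C)$ of $p_C\colon X_C\to C$; by \cite[Rem.19]{KST07} the variety $X_C$ is smooth near $s(C)$ and $T_{X_C/C}\simeq p_X^*\mu^*\sF_1$ there, so $(-K_{X_C/C}-(n-m)p_X^*\mu^*A)\cdot s(C)>0$ --- the section meets no multiple fibre component, so the discrepancy between $-K_{X_C/C}$ and $\det T_{X_C/C}$ vanishes along it. If now $H^0(F,\sO_F(D))\neq 0$, Lemma \ref{lemmadirectimage} makes $K_{X_C/C}+jp_X^*\mu^*A$ pseudoeffective, and since $s(C)$ is a section whose normal bundle $\sF_1|_C$ is ample, \cite[Cor.8.4.3]{Laz04b} forces every pseudoeffective divisor to meet $s(C)$ nonnegatively. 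This contradicts the displayed inequality without ever transferring positivity to curves inside $F$. Some such device is needed to close your argument; as written, the last third of the proof does not go through.
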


Let \holom{\varphi}{X}{Y} be a fibration between projective manifolds, and
let $\Omega_X \rightarrow \Omega_{X/Y} \rightarrow 0$ be the canonical map between the sheaves of K\"ahler
differentials. We define the relative tangent sheaf $T_{X/Y}$ to be the saturation of
\[
\Omega_{X/Y}^* \rightarrow \Omega_X^* =: T_X
\] 
in $T_X$, and $\det T_{X/Y}$ the divisor corresponding to its determinant.  
The main difficulty of the proof is that in general
the relative canonical bundle of a fibration does not coincide with the dual of $\det T_{X/Y}$.
We overcome this difficulty by making an appropriate base change.

\begin{proof}[Proof of Theorem \ref{theoremgenericnef}] 
Let us assume that $\Omega_X\hspace{-0.8ex}<\hspace{-0.8ex}A\hspace{-0.8ex}>$ is not generically nef.
We fix $L_1, \ldots, L_{n-1}$ ample Cartier divisors on $X$ such that $\Omega_X\hspace{-0.8ex}<\hspace{-0.8ex}A\hspace{-0.8ex}>$ is not generically nef
with respect to $L_1, \ldots, L_{n-1}$. Let
$$
C=D_1 \cap \ldots \cap D_{n-1}
$$ 
be a MR-general curve
where $D_i \in |m_i L_i|$ general and $m_i\gg0$ such that $\Omega_X\hspace{-0.8ex}<\hspace{-0.8ex}A\hspace{-0.8ex}>\hspace{-0.8ex}|_C$ is not nef.
If $\sF\hspace{-0.8ex}<\hspace{-0.8ex}A\hspace{-0.8ex}>$ is a non-zero torsion-free $\Q$-twisted sheaf on $X$, 
we define the slope
$$
\mu(\sF\hspace{-0.8ex}<\hspace{-0.8ex}A\hspace{-0.8ex}>) := \frac{c_1(\sF\hspace{-0.8ex}<\hspace{-0.8ex}A\hspace{-0.8ex}>\hspace{-0.8ex}|_C)}{\rk \sF}.
$$ 
By Equation \eqref{qc1} one has
\[
\frac{c_1(\sF\hspace{-0.8ex}<\hspace{-0.8ex}A\hspace{-0.8ex}>\hspace{-0.8ex}|_C)}{\rk \sF}= \frac{c_1(\sF|_C)}{\rk \sF} + A \cdot  C.
\]
By definition the $\Q$-twisted sheaf $\sF\hspace{-0.8ex}<\hspace{-0.8ex}A\hspace{-0.8ex}>$ is semistable if for every non-zero torsion-free subsheaf $\sE \subset \sF$, we have $\mu(\sE\hspace{-0.8ex}<\hspace{-0.8ex}A\hspace{-0.8ex}>) \leq \mu(\sF\hspace{-0.8ex}<\hspace{-0.8ex}A\hspace{-0.8ex}>)$.

Denote by $T_X:=\Omega_X^*$ the tangent sheaf of $X$,
and let 
\[
0= \sF_0 \subsetneq \sF_1 \subsetneq \ldots \subsetneq \sF_r=T_X
\]
be the Harder-Narasimhan filtration of $T_X$ with respect to $L_1, \ldots, L_{n-1}$. 
Then for $i=1, \ldots, r$, the graded pieces $\sG_i:=\sF_i/\sF_{i-1}$ are semistable torsion-free sheaves
and if $\mu(\sG_i)$ denotes the slope, we have a strictly decreasing sequence
\[
\mu(\sG_1) > \mu(\sG_2) > \ldots > \mu(\sG_r).
\]
Since twisting with a $\Q$-Cartier $\Q$-divisor does not change the stability properties of a torsion-free
sheaf, the Harder-Narasimhan filtration of $T_X\hspace{-0.8ex}<\hspace{-0.8ex}-A\hspace{-0.8ex}>$ is 
\[
0= \sF_0\hspace{-0.8ex}<\hspace{-0.8ex}-A\hspace{-0.8ex}> \subsetneq \sF_1\hspace{-0.8ex}<\hspace{-0.8ex}-A\hspace{-0.8ex}> \subsetneq \ldots \subsetneq \sF_r\hspace{-0.8ex}<\hspace{-0.8ex}-A\hspace{-0.8ex}>=T_X\hspace{-0.8ex}<\hspace{-0.8ex}-A\hspace{-0.8ex}>
\]
with graded pieces $\sG_i\hspace{-0.8ex}<\hspace{-0.8ex}-A\hspace{-0.8ex}>$ and slopes
\[
\mu(\sG_i\hspace{-0.8ex}<\hspace{-0.8ex}-A\hspace{-0.8ex}>) = \mu(\sG_i)-A \cdot C.
\]
We claim that
\begin{equation} \label{equationstar}
\mu(\sG_1\hspace{-0.8ex}<\hspace{-0.8ex}-A\hspace{-0.8ex}>)=\mu(\sF_1\hspace{-0.8ex}<\hspace{-0.8ex}-A\hspace{-0.8ex}>) > 0.
\end{equation}
Otherwise the slopes of all the graded pieces $\sG_i\hspace{-0.8ex}<\hspace{-0.8ex}-A\hspace{-0.8ex}>$ are non-positive.
By the Mehta-Ramanathan theorem \cite[Thm.6.1]{MR82} the Harder-Narasimhan filtration commutes with restriction to $C$, 
so the $\Q$-twisted vector bundles $\sG_i\hspace{-0.8ex}<\hspace{-0.8ex}-A\hspace{-0.8ex}>\hspace{-0.8ex}|_C$ are semistable of non-positive slope, hence 
antinef. Thus $\Omega_X\hspace{-0.8ex}<\hspace{-0.8ex}A\hspace{-0.8ex}>\hspace{-0.8ex}|_C$ is an extension of nef $\Q$-vector bundles, hence nef.
This contradicts our hypothesis.

The $\Q$-Cartier divisor $A$ being nef $\mu(\sF_1\hspace{-0.8ex}<\hspace{-0.8ex}-A\hspace{-0.8ex}>) > 0$ implies $\mu(\sF_1)>0$, 
so $\sF_1|_C$ is ample. We know by
standard arguments in stability theory \cite[p.61ff]{MP97} 
that $\sF_1$ is integrable, moreover the MR-general curve $C$ does not meet the singular
locus of the foliation by Remark \ref{remarkmrgeneral}.
Thus we can apply the
Bogomolov-McQuillan theorem \cite[Thm.0.1]{BM01}, \cite[Thm.1]{KST07} 
to see that the closure of a $\sF_1$-leaf
through a generic point of $C$ is algebraic and rationally connected. 
Since $C$ moves in a covering family the  generic $\sF_1$-leaves are algebraic with rationally connected closure.
If $\chow{X}$ denotes the Chow scheme of $X$, we get a rational map $X \dashrightarrow \chow{X}$ that sends a 
general point $x$ to the closure of the unique leaf through $x$. 
Let $Y$ be a desingularisation of the closure of the image,
and let $X'$ be a desingularisation of the universal family over $Y$. 
By construction the natural map \holom{\mu}{X'}{X} is birational and the general fibres of the fibration  $\holom{\varphi}{X'}{Y}$
map onto the closure of general $\sF_1$-leaves.

By Remark \ref{remarkmrgeneral} the MR-general curve $C$ does not meet the exceptional locus of $\mu$,
so we can see it as a curve in $X'$.
Denote by $X_C$ the normalisation of the fibre product $X' \times_Y C \subset X' \times C$,
and let \holom{p_X}{X_C}{X} the projection on the first factor.
The fibration $X' \times_Y C \rightarrow C$
admits a natural section 
$$
C \rightarrow X' \times_Y C \subset X' \times C, \ c \ \mapsto (c,c), 
$$ 
by the universal property of the normalisation we get a section of \holom{p_C}{X_C}{C}
which we denote by \holom{s}{C}{X_C}.
By \cite[Rem.19]{KST07} the normal variety $X_C$ is smooth in an analytic neighbourhood $U \subset X_C$ of $s(C)$ 
and 
\[
T_{X_C/C}|_U \simeq (p_X^* \mu^* \sF_1)|_U. 
\]
In particular by the inequality \eqref{equationstar}, one has 
$$
(\det T_{X_C/C} - (n-m) p_X^* \mu^* A) \cdot s(C) = (\det \sF_1-(n-m)A) \cdot C = (n-m) \mu(\sF_1\hspace{-0.8ex}<\hspace{-0.8ex}-A\hspace{-0.8ex}>) > 0.
$$
Since $s(C)$ is a section of the fibration it does not meet any multiple fibre components, so 
$-K_{X_C/C}$ and $\det T_{X_C/C}$ coincide in a neighbourhood of $s(C)$.
Thus
\begin{equation} \label{equationdoublestar}
\qquad (-K_{X_C/C} - (n-m) p_X^* \mu^* A) \cdot s(C)
=
(\det T_{X_C/C} - (n-m) p_X^* \mu^* A) \cdot s(C)
> 0.
\end{equation}
Since $X_C$ is smooth in a neighbourhood of $s(C)$, we can replace $X_C$ by a desingularisation
without changing the inequality \eqref{equationdoublestar}.
We will now argue by contradiction and suppose that there exists a
Cartier divisor $D$ on a general $\varphi$-fibre $F$ such that $D \sim_\Q K_{F}+j \mu^*A$ for some $j \in [0, n-m] \cap \Q$ and
\[
H^0(F, \sO_F(D)) \neq 0.
\]
Since the general $p_C$-fibre is a general $\varphi$-fibre this implies by 
Lemma \ref{lemmadirectimage} that
$K_{X_C/C}+j p_X^* \mu^* A$ is pseudoeffective.
Since $s(C)$ is a section, its normal bundle is isomorphic to $T_{X_C/C}|_{s(C)} \simeq \sF_1|_{C}$
which is ample. This implies by \cite[Cor.8.4.3]{Laz04b}
that $E \cdot s(C) \geq 0$ for every effective divisor $E \subset X_C$, hence
\[
(K_{X_C/C}+ (n-m) p_X^* \mu^* A) \cdot s(C) \geq (K_{X_C/C}+ j p_X^* \mu^* A) \cdot s(C) \geq 0.
\]
This contradicts the inequality \eqref{equationdoublestar}.
\end{proof}

\begin{proof}[Proof of Theorem \ref{theoremgenericnefCartier}]
Suppose that $\Omega_X\hspace{-0.8ex}<\hspace{-0.8ex}A\hspace{-0.8ex}>$ is not generically nef. Applying Theorem \ref{theoremgenericnef}
yields a birational morphism \holom{\mu}{X'}{X} from a projective manifold $X'$
and a fibration \holom{\varphi}{X'}{Y} onto a projective manifold $Y$ of dimension $m$
such that the general fibre $F$ satisfies
\[
H^0(F, \sO_F(K_F+ j \mu^* A)) = 0 \qquad \forall \ j \in \{ 1, \ldots, n-m \}.
\]
It follows from Prop.\ref{propositionbasic}.b) and Prop.\ref{propositionbasic}.c) that $(X,A)$ is birationally a scroll.
\end{proof}

In Section \ref{sectionBS} we use the following technical lemma:

\begin{lemma} \label{lemmatechnical}
In the situation of the proof of Theorem \ref{theoremgenericnef}, suppose that $A$ is a Cartier divisor.
Suppose moreover that $\Omega_X\hspace{-0.8ex}<\hspace{-0.8ex}-A\hspace{-0.8ex}>$ is not generically nef,
i.e. 
\[
\mu(\sG_1\hspace{-0.8ex}<\hspace{-0.8ex}-A\hspace{-0.8ex}>) > 0.
\]
Denote by $l \in \N$ the maximal number such that 
\[
\mu(\sG_i\hspace{-0.8ex}<\hspace{-0.8ex}-A\hspace{-0.8ex}>) > 0 \qquad \forall \ i \in \{ 1, \ldots, l \}.
\]
Then the following holds:

a) For every $i \in \{ 1, \ldots, l \}$ we have 
\[
\mu(\sF_i\hspace{-0.8ex}<\hspace{-0.8ex}-\frac{\rk \sF_i+1}{\rk \sF_i}A\hspace{-0.8ex}>) \leq 0.
\]

b) There exists a sequence of rational numbers $w_1, \ldots, w_l$ such that
\[
w_i \in [ \rk \sG_i, \rk \sG_i+1 ]  \qquad \forall \ i \in \{ 1, \ldots, l \}
\]
and
\[
\sum_{i=1}^l w_i = (\sum_{i=1}^l \rk \sG_i)+1
\]
and
\[
\mu(\sG_i\hspace{-0.8ex}<\hspace{-0.8ex}-\frac{w_i}{\rk \sG_i}A\hspace{-0.8ex}>) \leq 0 \qquad \forall \ i \in \{ 1, \ldots, l \}.
\]
\end{lemma}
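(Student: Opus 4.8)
The plan is to derive a) by running the geometric argument from the proof of Theorem \ref{theoremgenericnef} not just for the maximal destabilising sheaf $\sF_1$ but for every term $\sF_i$ of the Harder--Narasimhan filtration with $i\le l$, and then to deduce b) from a) by an elementary interpolation. Fix $i\in\{1,\dots,l\}$ and set $r_i:=\rk\sF_i$; recall that $\mu(\sG_i\hspace{-0.8ex}<\hspace{-0.8ex}-A\hspace{-0.8ex}>)>0$ precisely for $i\le l$. First I would check that $\sF_i$ is a foliation with $\sF_i|_C$ ample: ampleness holds because the smallest Harder--Narasimhan slope of $\sF_i|_C$ is $\mu(\sG_i)=\mu(\sG_i\hspace{-0.8ex}<\hspace{-0.8ex}-A\hspace{-0.8ex}>)+A\cdot C>0$ (using $A$ nef and $i\le l$) and a bundle on a curve with positive minimal slope is ample; integrability holds because the O'Neill tensor $\wedge^2\sF_i\to T_X/\sF_i$ vanishes, the group $\Hom_X(\wedge^2\sF_i,T_X/\sF_i)$ being zero since with respect to $L_1,\dots,L_{n-1}$ the source has minimal slope $\ge 2\mu(\sG_i)$ while the target has maximal slope $\mu(\sG_{i+1})<\mu(\sG_i)<2\mu(\sG_i)$ (the statement being vacuous for $i=r$).

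Granting this, I would reproduce, with $\sF_i$ replacing $\sF_1$, the construction of the proof of Theorem \ref{theoremgenericnef}: the Bogomolov--McQuillan theorem \cite[Thm.0.1]{BM01}, \cite[Thm.1]{KST07} produces a birational morphism $\mu_i\colon X_i'\to X$, a fibration $\varphi_i\colon X_i'\to Y_i$ with $\dim Y_i=n-r_i$ and rationally connected general fibre $F_i$, the normalisation $X_{C,i}$ of $X_i'\times_{Y_i}C$ with its projections $p_{X,i},p_{C,i}$, and a section $s_i\colon C\to X_{C,i}$ of $p_{C,i}$ with normal bundle $\sF_i|_C$, along which $T_{X_{C,i}/C}$ agrees both with $p_{X,i}^*\mu_i^*\sF_i$ and with $-K_{X_{C,i}/C}$; in particular $(-K_{X_{C,i}/C})\cdot s_i(C)=\det\sF_i\cdot C$. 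The one point not already present in the proof of Theorem \ref{theoremgenericnef}, and the step I expect to be the main obstacle, is to see that $\mu_i^*A|_{F_i}$ is a nef and \emph{big} Cartier divisor on $F_i$, and likewise that $p_{X,i}^*\mu_i^*A$ is nef and big on a desingularisation of $X_{C,i}$: this should follow because the MR-general curve $C$ is not contained in the augmented base locus of $A$, hence neither is the subvariety $\mu_i(\varphi_i^{-1}(\varphi_i(C)))$ which contains it, and a general $F_i$ avoids it too since the fibres cover $X$; so $A$ restricts to a big divisor on these subvarieties, and bigness survives the relevant generically finite pullbacks. With bigness in hand, Proposition \ref{propositionbasic}.a applied to $F_i$ gives a $j\in\{1,\dots,r_i+1\}$ with $H^0(F_i,\sO_{F_i}(K_{F_i}+j\mu_i^*A))\ne 0$, whence Lemma \ref{lemmadirectimage} (the general $p_{C,i}$-fibre being a general $\varphi_i$-fibre) makes $K_{X_{C,i}/C}+j\,p_{X,i}^*\mu_i^*A$ pseudoeffective; since $s_i(C)$ has ample normal bundle it meets every pseudoeffective divisor non-negatively by \cite[Cor.8.4.3]{Laz04b}, and combining this with $p_{X,i}^*\mu_i^*A\cdot s_i(C)=A\cdot C\ge 0$ and $r_i+1\ge j$ yields $(K_{X_{C,i}/C}+(r_i+1)p_{X,i}^*\mu_i^*A)\cdot s_i(C)\ge 0$, i.e. $\det\sF_i\cdot C\le(r_i+1)(A\cdot C)$, which is exactly the assertion of a).

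For b) I would argue purely arithmetically, using a) only in the case $i=l$. Put $s_i:=\rk\sG_i$, so $\sum_{i=1}^l s_i=\rk\sF_l$, and note $A\cdot C>0$ since $A$ is nef and big and $C$ is MR-general. The inequality $\mu(\sG_i\hspace{-0.8ex}<\hspace{-0.8ex}-\frac{w_i}{\rk\sG_i}A\hspace{-0.8ex}>)\le 0$ is equivalent to $w_i\ge s_i+\beta_i$, where $\beta_i:=s_i\,\mu(\sG_i\hspace{-0.8ex}<\hspace{-0.8ex}-A\hspace{-0.8ex}>)/(A\cdot C)$ is a positive rational number because $i\le l$. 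Summing and using a) for $i=l$,
\[
\sum_{i=1}^l\beta_i=\frac{\det\sF_l\cdot C-(\rk\sF_l)(A\cdot C)}{A\cdot C}\le 1,
\]
so in particular $\beta_i\le\sum_j\beta_j\le 1$ for every $i$. Setting $t_i:=\beta_i+\frac{1}{l}\bigl(1-\sum_{j=1}^l\beta_j\bigr)$ one gets $\beta_i\le t_i\le\frac{(l-1)\beta_i+1}{l}\le 1$ and $\sum_i t_i=1$; then $w_i:=s_i+t_i$ lies in $[\rk\sG_i,\rk\sG_i+1]$, satisfies $\sum_i w_i=(\sum_i\rk\sG_i)+1$, and $w_i\ge s_i+\beta_i$ yields the desired slope inequality. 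Thus the whole difficulty is concentrated in a) (and logically only in its case $i=l$), while the argument for a) itself is uniform in $i$.
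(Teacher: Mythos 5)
Your proof is correct. For part a) you follow essentially the same route as the paper: run the Bogomolov--McQuillan/section construction of the proof of Theorem \ref{theoremgenericnef} for each $\sF_i$ with $i\le l$, invoke Proposition \ref{propositionbasic}.a on the general fibre to produce a $j\le \rk\sF_i+1$ with non-vanishing, and use Lemma \ref{lemmadirectimage} together with the ample normal bundle of the section to bound $\det\sF_i\cdot C$ from above. (The paper phrases this as a proof by contradiction and is silent on the points you flag --- integrability and ampleness of $\sF_i|_C$ for $i\ge 2$, and bigness of the relevant restrictions of $A$ --- but these are exactly the same issues already implicit in the proof of Theorem \ref{theoremgenericnef}, and your verifications of them are correct.) For part b) your argument is genuinely different: the paper runs an induction on $i$, using the full strength of a) via the inequalities $(*_i)$ to define $w_1,\dots,w_{l-1}$ as the exact roots $(c_1(\sG_i)-w_iA)\cdot C=0$ and then adjusting $w_l$ to meet the sum constraint, whereas you reformulate the constraints as $w_i\ge \rk\sG_i+\beta_i$ with $\beta_i>0$, observe that feasibility of the whole system reduces to $\sum\beta_i\le 1$, which is precisely a) for the single index $i=l$, and then distribute the slack $1-\sum\beta_j$ evenly. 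Your version is shorter, avoids the bookkeeping of the induction, and makes transparent that only the top case of a) is logically needed for b); the paper's induction has the mild advantage of producing $w_i$ for which the slopes are exactly zero for $i<l$, but nothing downstream uses that.
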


\begin{proof} 
For statement a) we argue as in the proof of Theorem \ref{theoremgenericnef}: For $i \in \{ 1, \ldots, l \}$
the saturated subsheaf $\sF_i \subset T_X$ is integrable and
we get a birational morphism \holom{\mu_i}{X_i}{X} and a fibration $\holom{\varphi_i}{X_i}{Y_i}$ 
such that $\sF_i$ corresponds to the relative tangent sheaf of $\varphi_i$.
We argue by contradiction and suppose that
\[
\mu(\sF_i\hspace{-0.8ex}<\hspace{-0.8ex}-\frac{\rk \sF_i+1}{\rk \sF_i}A\hspace{-0.8ex}>) > 0.
\]
As in the proof of  Theorem \ref{theoremgenericnef} we see 
that the general $\varphi_i$-fibre $F_i$ satisfies
\[
H^0(F_i, \sO_F(K_{F_i}+ j \mu^* A)) = 0 \qquad \forall j \in \{ 1, \ldots, \rk \sF_i+1 \}. 
\]
This contradicts Proposition \ref{propositionbasic}, a).

If $l=1$ the statement b) is an immediate consequence of a): just take $w_1=\rk \sG_1+1$.
Suppose now that $l>1$ and
denote by $C$ the MR-general curve we used in the proof of Theorem \ref{theoremgenericnef} to compute the slopes.
By statement a) we have
\[
[c_1(\sF_i)-(\rk \sF_i+1)A] \cdot C \leq 0 \qquad \forall \ i \in \{ 1, \ldots, l \}.
\]
Since the sheaves $\sG_d$ are the graded pieces of the filtration $\sF_\bullet$ this 
implies that for all $i \in \{ 1, \ldots, l \}$ we have
\[
(*_i) \qquad [\sum_{d=1}^i c_1(\sG_d)-((\sum_{d=1}^i \rk \sG_d)+1)A] \cdot C \leq 0.
\]
We will now construct a sequence $w_i$ inductively by using the inequalities $(*_i)$. 

{\em Start of the induction $i=1$.}
By hypothesis have
\[
(c_1(\sG_1)-\rk \sG_1 A) \cdot C > 0
\]
and since $\sG_1=\sF_1$ by $(*_1)$
\[
[c_1(\sG_1)-(\rk \sG_1+1)A] \cdot C \leq 0.
\]
We define $w_1$ to be the unique rational number such that
\[
(c_1(\sG_1)-w_1 A) \cdot C = 0,
\]
i.e. the slope of $\sG_1\hspace{-0.8ex}<\hspace{-0.8ex}-\frac{w_1}{\rk \sG_1}A\hspace{-0.8ex}>$
equals zero.

{\em Induction step $i-1 \rightarrow i$.}
We have constructed so far $w_1, \ldots, w_{i-1}$ such that
\[
w_d \in [ \rk \sG_d, \rk \sG_d+1 ]  \qquad \forall \ d \in \{ 1, \ldots, i-1 \}
\]
and\footnote{For $i=2$ the inequality $(**_2)$ is just $w_1 \in [ \rk \sG_1, \rk \sG_1+1 ]$, for
$i>2$ this will be established at the end of the preceding induction step.}
\[
(**_i) \qquad \sum_{d=1}^{i-1} w_d \leq (\sum_{d=1}^{i-1} \rk \sG_d)+1
\]
and
\[
(c_1(\sG_d)-w_d A) \cdot C = 0 \qquad \forall \ d \in \{ 1, \ldots, i-1 \}.
\]
Plugging these equalities in the inequality $(*_i)$ we obtain
\[
\left[
c_1(\sG_i)-
\left(
\rk \sG_i + (\sum_{d=1}^{i-1} \rk \sG_d)+1-(\sum_{i=d}^{i-1} w_d)
\right)
A
\right] 
\cdot C \leq 0.
\]
By $(**_i)$ we have $(\sum_{d=1}^{i-1} \rk \sG_d)+1-(\sum_{d=1}^{i-1} w_d) \geq 0$
and by hypothesis
\[
(c_1(\sG_i)-\rk \sG_i A) \cdot C > 0
\]
If $i<l$ we define $w_i$ to be the unique rational number such that
\[
(c_1(\sG_i)-w_i A) \cdot C = 0,
\]
i.e. the slope of $\sG_i\hspace{-0.8ex}<\hspace{-0.8ex}-\frac{w_i}{\rk \sG_i}A\hspace{-0.8ex}>$
equals zero.
Since we have
\[
w_i \in [ \rk \sG_i, \rk \sG_i + (\sum_{d=1}^{i-1} \rk \sG_d)+1-(\sum_{d=1}^{i-1} w_d) ]
\]
we see immediately that $(**_{i+1})$ holds, so the induction can continue.

If $i=l$ we set 
\[
w_l := \rk \sG_l + (\sum_{d=1}^{l-1} \rk \sG_d)+1-(\sum_{d=1}^{l-1} w_d),
\]
so we have $\sum_{i=1}^l w_i = (\sum_{i=1}^l \rk \sG_i)+1$.
\end{proof}

\section{The Beltrametti-Sommese conjecture} 
\label{sectionBS}

The following lemma is the technical cornerstone of our approach.

\begin{lemma} \label{lemmaminusone}
Let $X$ be a projective manifold, and let $A$ be a Cartier divisor on $X$. Suppose that
$1, \ldots, n-1$ are roots of the Hilbert polynomial $\chi(X, \sO_X(K_X+tA))$. Then one has
\begin{equation} \label{equationone}
\chi(X, \sO_X) + \frac{1}{2}  A^{n-1} \cdot (K_X+(n-1)A)=0
\end{equation}
and
\begin{equation} \label{equationtwo}
A^{n-2} \cdot
[2  (K_X^2+c_2(X))
+
6n A \cdot K_X 
+
(n+1) (3n-2) A^2
]=0.
\end{equation}
\end{lemma}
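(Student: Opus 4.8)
The plan is to exploit the explicit shape \eqref{RR} of the Hilbert polynomial together with the factorisation forced by the hypothesis. Set $P(t):=\chi(X,\sO_X(K_X+tA))$. By \eqref{RR} this is a polynomial of degree at most $n$ in $t$, say
\[
P(t)=c_n t^n+c_{n-1}t^{n-1}+c_{n-2}t^{n-2}+\cdots+c_0 ,
\]
with $c_n=\tfrac{A^n}{n!}$, $c_{n-1}=\tfrac{A^{n-1}\cdot K_X}{2(n-1)!}$, $c_{n-2}=\tfrac{A^{n-2}\cdot(K_X^2+c_2(X))}{12(n-2)!}$ and $c_0=(-1)^n\chi(X,\sO_X)$. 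Since $1,\ldots,n-1$ are $n-1$ distinct roots of $P$ and $\deg P\le n$, the monic polynomial $Q(t):=\prod_{k=1}^{n-1}(t-k)$ divides $P$ in $\Q[t]$ with a quotient of degree at most one; hence
\[
P(t)=(\alpha t+\beta)\,Q(t)\qquad\text{for some }\alpha,\beta\in\Q
\]
(the degenerate case $A^n=0$ is included, with $\alpha=0$). Everything now reduces to comparing coefficients.

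First I would expand $Q(t)=t^{n-1}-s_1t^{n-2}+s_2t^{n-3}-\cdots+(-1)^{n-1}(n-1)!$, where $s_j$ denotes the $j$-th elementary symmetric function of $1,\ldots,n-1$; in particular $s_1=\tfrac{n(n-1)}{2}$. Comparing the coefficients of $t^n$, of $t^{n-1}$ and the constant terms on the two sides of $P(t)=(\alpha t+\beta)Q(t)$ gives
\[
\alpha=c_n,\qquad \beta=c_{n-1}+s_1c_n,\qquad c_0=(-1)^{n-1}(n-1)!\,\beta .
\]
Eliminating $\beta$ from the last two identities and inserting the values of $c_{n-1},c_n,c_0$ and $s_1$, a one-line computation (the factor $(n-1)!$ cancels) yields exactly Equation \eqref{equationone}. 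Next, comparing the coefficients of $t^{n-2}$ gives $c_{n-2}=s_2c_n-s_1\beta=(s_2-s_1^2)c_n-s_1c_{n-1}$. The only non-formal input here is the evaluation
\[
s_2-s_1^2=-\frac{s_1^2+\sum_{i=1}^{n-1}i^2}{2}=-\frac{n(n-1)(n+1)(3n-2)}{24},
\]
which follows from $\sum_{i=1}^{n-1}i^2=\tfrac{(n-1)n(2n-1)}{6}$ together with Newton's identity $\sum_{i=1}^{n-1}i^2=s_1^2-2s_2$. Substituting $c_{n-2},c_{n-1},c_n$, clearing the factor $12(n-2)!$ and multiplying through by $2$ then produces Equation \eqref{equationtwo}.

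I do not expect any real obstacle: this is precisely the routine exercise, mentioned just before the statement, of translating the vanishing of a polynomial at prescribed points into relations among its coefficients, and it uses nothing beyond the Riemann--Roch expansion \eqref{RR}. The only step asking for a little care is the bookkeeping --- keeping track of the signs in the expansion of $Q$ and correctly computing the symmetric functions $s_1,s_2$ of $\{1,\ldots,n-1\}$ and the power sum $\sum_{i=1}^{n-1}i^2$ --- after which both displayed identities fall out by substitution and clearing of denominators.
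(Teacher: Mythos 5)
Your proposal is correct and follows essentially the same route as the paper: factor the Hilbert polynomial as a linear factor times $\prod_{k=1}^{n-1}(t-k)$, compare coefficients of $t^n, t^{n-1}, t^{n-2}$ and the constant term with the Riemann--Roch expansion \eqref{RR}, and evaluate the symmetric functions of $1,\ldots,n-1$ (the paper computes $s_2=\tfrac{1}{24}(n-2)(n-1)n(3n-1)$ directly, you compute $s_2-s_1^2$; same bookkeeping). Your parametrisation $(\alpha t+\beta)Q(t)$ even handles the degenerate case $A^n=0$ slightly more cleanly than the paper's $\tfrac{A^n}{n!}(t-a)Q(t)$.
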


\begin{remark*}
For $n=2$ the left hand side of Equation \eqref{equationone} and \eqref{equationtwo} 
are (multiples of) the Riemann-Roch formula for $\chi(X, \sO_X(K_X+A))$.
This corresponds well with the origin of the Beltrametti-Sommese conjecture \cite[Ch. 7.2]{BS95}:
the linear system $K_X+(n-1)A$ should behave as an adjoint linear system on a surface.  
\end{remark*}

\begin{proof}
By hypothesis 
$$
\chi(X, \sO_X(K_X+tA))=\frac{A^n}{n!} (t-a)   \prod_{j=1}^{n-1} (t-j),
$$
where $a$ is a parameter.
Since 
$$
\prod_{j=1}^{n-1} (t-j)
=
t^{n-1} - (\sum_{j=1}^{n-1} j) t^{n-2} 
+ (\sum_{\stackrel{j,k=1}{j<k}}^{n-1} j k) t^{n-3} - \ldots + (-1)^{n-1} (n-1)!,
$$
we have
$$
(t-a) \prod_{j=1}^{n-1} (t-j)
=
t^n - (a+\sum_{j=1}^{n-1} j) t^{n-1} 
+ (\sum_{\stackrel{j,k=1}{j<k}}^{n-1} j k+ a \sum_{j=1}^{n-1} j) t^{n-2}
- \ldots +
(-1)^{n} a (n-1)!.
$$
Comparing coefficients with Riemann-Roch formula \eqref{RR}, we get
\begin{eqnarray*}
\frac{A^{n-1} \cdot  K_X}{2(n-1)!} 
&=& -  
(a+\sum_{j=1}^{n-1} j) \frac{A^n}{n!},
\\
\frac{A^{n-2} \cdot (K_X^2+c_2(X))}{12(n-2)!}
&=&
 (\sum_{\stackrel{j,k=1}{j<k}}^{n-1} j k+ a \sum_{j=1}^{n-1} j) \frac{A^n}{n!},
\\
\chi(X, \sO_X) &=&  a \frac{A^n}{n}
\end{eqnarray*}
The statement follows by plugging these expressions into the Equations \eqref{equationone} and \eqref{equationtwo} and using the
elementary formula
\[
\sum_{\stackrel{j,k=1}{j<k}}^{n-1} j k = \frac{1}{24} (n-2) (n-1) n (3n-1).
\]
\end{proof}

\begin{proof}[Proof of Theorem \ref{theoremBS}]
We argue by contradiction and suppose that $H^0(X, \sO_X(K_X+jA))=0$ for all $j \in \{1, \ldots, n-1\}$.
Let \holom{\nu}{X'}{X} be a resolution of singularities, then by
Lemma \ref{lemmareduction} one has
\[
H^0(X', \sO_{X'}(K_{X'}+j\nu^*A))=0 \qquad \forall \ j \in \{1, \ldots, n-1\}.
\]
Since $\nu^* A$ is nef and big, the Kawamata-Viehweg theorem implies that
\[
\chi(X', \sO_{X'}(K_{X'}+j\nu^*A)) = h^0(X', \sO_{X'}(K_{X'}+j\nu^*A))=0 \qquad \forall \ j \in \{ 1, \ldots, n-1\},
\]
in particular Lemma \ref{lemmaminusone} applies.
Let $Y$ be the base of the MRC-fibration of $X'$. 

\begin{center}
{\bf  Case I: $\dim Y=0$.}
\end{center}

Since $A$ is nef and $K_X+(n-1)A$ is generically nef, one has 
\begin{equation} \label{inequalitya}
(K_{X'}+(n-1)\nu^* A) \cdot (\nu^* A)^{n-1} = (K_{X}+(n-1)A) \cdot A^{n-1} \geq 0.
\end{equation}
By Lemma \ref{lemmaminusone} we have
\[
\chi(X', \sO_{X'}) + \frac{1}{2} (K_{X'}+(n-1)\nu^*A) \cdot (\nu^* A)^{n-1}=0.
\]
Yet $X'$ is rationally connected, so $\chi(X', \sO_{X'})=1$. 
This contradicts the inequality \eqref{inequalitya}.

\begin{center}
{\bf  Case II: $\dim Y=1$.}
\end{center}

Since the base of the MRC-fibration has dimension one, 
we have a morphism \holom{\varphi}{X'}{Y} 
onto a smooth curve of genus at least one (cf. Remark \ref{remarksRCquotient}). 
By Proposition \ref{propositiondirectimage} the divisor $K_{X'}+(n-1)\nu^*A$ is generically nef
unless $(X', \nu^* A)$ (and hence $(X,A)$) is birationally a scroll with base $Y$. 

a) If $(X,A)$ is not birationally a scroll with base $Y$, denote by $F'$ a general $\varphi$-fibre, then 
by Proposition \ref{propositionbasic} there exists a $j \in \{1, \ldots, n-1 \}$
such that $H^0(F', \sO_{F'}(K_{F'}+j\nu^*A)) \neq 0$. In particular the direct image sheaf
$ \varphi_* \sO_{X'}(K_{X'/Y}+j\nu^*A)$ 
is not zero and an ample vector bundle by \cite[Cor.3.7]{Vie01}.
Thus
\begin{eqnarray*}
h^0(X', \sO_{X'}(K_{X'}+j\nu^*A)) &=& h^0(Y,  \sO_Y(K_Y) \otimes  \varphi_* \sO_{X'}(K_{X'/Y}+j\nu^*A)) 
\\
& \geq & \chi(Y,   \sO_Y(K_Y) \otimes  \varphi_* \sO_{X'}(K_{X'/Y}+j\nu^*A)) > 0
\end{eqnarray*}
by an easy Riemann-Roch computation for vector bundles on curves.

b) If $(X,A)$ is birationally a scroll with base $Y$,  then by assumption $X$ has rational singularities (cf. the statement of 
Theorem \ref{theoremBS}). The Albanese morphism $\holom{\alpha}{X'}{Alb(X')}$ identifies to the composition of 
the MRC-fibration \holom{\varphi}{X'}{Y} and the embedding \holom{\alpha_Y}{Y}{Alb(Y)}.
Since $X$ has rational singularities the Albanese map of $X'$ factors through $\nu$ \cite[Lemma 2.4.1]{BS95},
so we get a fibration $\holom{\psi}{X}{Y}$ such that $\varphi = \psi \circ \nu$ . 
A general $\psi$-fibre $F$ is a Cartier divisor in $X$, so
\[
(K_F+(n-1)A|_F) \cdot A|_F^{n-2} = (K_X+(n-1)A) \cdot F \cdot A^{n-2} \geq 0.
\]
In particular by Proposition \ref{propositionbasic}.b) there exists a $j \in \{1, \ldots, n-1 \}$
such that $H^0(F, \sO_F(K_F+jA)) \neq 0$. Since $\nu_* \sO_{X'}(K_{X'}) \simeq \sO_X(K_X)$ this shows that
the direct image sheaf
$$ 
\varphi_* \sO_{X'}(K_{X'/Y}+j\nu^*A) \simeq \psi_* \sO_X(K_{X/Y}+jA)
$$ 
is not zero and we conclude as in a).

The following example shows why our strategy of proof does not apply if $(X,A)$ birationally a scroll
and $X$ has irrational singularities.

\begin{example}
Let $C \subset \PP^2$ be a smooth curve of degree three, and set $\sO_C(1)$ for
the restriction of the hyperplane divisor to $C$.
Denote by $A'$ the tautological divisor on the projectivised bundle $\holom{\varphi}{S':=\PP(\sO_C \oplus \sO_C(1))}{C}$.
Then $\sO_{S'}(A')$ is globally generated and induces a birational map $\holom{\nu}{S'}{S \subset \PP^3}$
that contracts the section corresponding to the quotient bundle 
$\sO_C \oplus \sO_C(1) \rightarrow \sO_C$. The surface $S$ has degree three and is of course the cone
over the elliptic curve $C$. Thus $S$ is normal, Gorenstein and $K_S = - H|_S$, where $H$ is the hyperplane divisor. 
The Cartier divisor $A:=H$ is ample, the adjoint bundle $\sO_S(K_S+A)$ is trivial, so nef and 
\[
H^0(S, \sO_S(K_S+A)) = \C.
\] 
It is not possible to prove the existence of this global section by looking only at the nonsingular surface $S'$:
the divisor $K_{S'}+\nu^* A = K_{S'}+A'$ is not generically nef, its restriction to a $\varphi$-fibre is $\sO_{\PP^1}(-1)$. 
\end{example}

\begin{center}
{\bf  Case III: $\dim Y=2$.}
\end{center}

In order to simplify the notation we assume without loss of generality that $X$ is smooth, so $X'=X$.
Note that by Proposition \ref{propositiondirectimage} the divisor $K_X+(n-1)A$ is pseudoeffective.
By Lemma \ref{lemmaminusone} one has
\[
\chi(X, \sO_{X}) + \frac{1}{2} (K_{X}+(n-1)A) \cdot A^{n-1}=0.
\]
Since $K_X+(n-1)A$ is pseudoeffective we get a contradiction if 
$\chi(X, \sO_X)>0$. 

Suppose now that $\chi(X, \sO_X) \leq 0$. Since there are no holomorphic forms
on a rationally connected variety and the general fibre of the MRC-fibration has
dimension $n-2$, we see that 
\[
h^k(X, \sO_X) = h^0(X, \Omega_X^k)=0 \qquad \forall \ k \geq 3. 
\]
Thus $\chi(X, \sO_X) \leq 0$ implies that $h^1(X, \sO_X) \neq 0$
and we have a non-trivial Albanese morphism 
\holom{\alpha}{X}{Alb(X)}.
We claim that there exists a $j \in \{1, \ldots, n-1\}$ such that 
the direct image sheaf $\alpha_* \sO_X(K_X+jA)$ is not zero:
indeed if $F$ is a general non-empty fibre of $\alpha$, then 
by Proposition \ref{propositionbasic} there exists a  $j \in \{1, \ldots, n-1\}$ such that 
\[
H^0(F, \sO_F(K_F+jA)) \neq 0.
\]
We now argue as in \cite{Xie09}:
let $P \in \Pic0(Alb(X))$ be a numerically trivial Cartier divisor, then
$jA+\alpha^* P$ is nef and big. Using the relative Kawamata-Viehweg theorem
and the Leray spectral sequence one obtains
\[
H^i(Alb(X), \alpha_* \sO_X(K_X+jA) \otimes \sO_{Alb(X)}(P))=0  \qquad \forall \ i>0.
\]
Therefore \cite[Cor.2.4]{Muk81} implies that
\[
H^0(X, \sO_X(K_X+jA+ \alpha^* P))
\simeq 
H^0(Alb(X),  \alpha_* \sO_X(K_X+jA) \otimes \sO_{Alb(X)}(P)) \neq 0
\]
for some $P \in \Pic0(Alb(X))$. In particular 
\[
\chi(X, \sO_X(K_X+jA+P)) \neq 0.
\]
Since tensoring with a numerically trivial Cartier divisor does not change the Euler characteristic, 
we get a contradiction to $\chi(X, \sO_X(K_X+jA))=0$. 

\begin{center}
{\bf  Case IV: $\dim Y \geq 3$.}
\end{center}

The following lemma is due to Fujita \cite[Lemma 2.5]{Fuj87} in the case where $A$ is an ample Cartier divisor and 
$X$ is Gorenstein and to Andreatta in the log-terminal setting \cite[Thm.2.1]{And95}.

\begin{lemma}  \label{lemmafujitabirational} 
Let $X$ be a normal, projective variety of dimension $n$ with at most log-terminal singularities.
Let \holom{\mu}{X}{X'} be an elementary contraction of birational type contracting a $K_X$-negative extremal ray $\Gamma$.
Let $\fibre{\mu}{y}$ be a fibre of dimension $r>0$.

If $A$ is a nef and big Cartier divisor on $X$ such that $A \cdot \Gamma>0$, then
\[
(K_X+r A) \cdot \Gamma \geq 0.
\]
\end{lemma}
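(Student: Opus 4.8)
The plan is to reduce the statement to the existence of a single low-degree rational curve in the fibre $\mu^{-1}(y)$, and then to conclude by an elementary numerical estimate. Precisely, it suffices to produce a rational curve $C \subseteq \mu^{-1}(y)$ whose class spans $\Gamma$ and which satisfies $-K_X \cdot C \le r$. Granting this, note that $A$ is Cartier with $A \cdot \Gamma > 0$, so the integer $A \cdot C$ is at least $1$; hence
\[
(K_X + rA) \cdot C = K_X \cdot C + r\,(A \cdot C) \ge -r + r = 0,
\]
where we have used $-K_X \cdot C \le r$ and $A \cdot C \ge 1$. Since $[C]$ spans $\Gamma$, this gives $(K_X + rA) \cdot \Gamma \ge 0$, as wanted.

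So everything is in producing such a curve. By the Cone Theorem for varieties with log-terminal singularities \cite{KM98}, the extremal ray $\Gamma$ is spanned by the class of a rational curve, this curve is contracted by $\mu$ (being the contraction of $\Gamma$), and the fibre $F := \mu^{-1}(y)$ is covered by rational curves whose classes lie in $\Gamma$. The general bound on the length of an extremal ray only gives $-K_X \cdot C \le 2n$ for such a curve, which is far too weak; the point is to bound the anticanonical degree by $r = \dim F$. This is achieved by a bend-and-break analysis of the covering family of rational curves on $F$: one passes to a resolution \holom{\pi}{\tilde X}{X}, works with the strict transforms of these curves together with the discrepancy relation $K_{\tilde X} = \pi^* K_X + \sum_i a_i E_i$ (all $a_i > -1$ by log-terminality), and uses crucially that $\mu$, being of birational type, has exceptional locus a proper closed subset of $X$. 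That this hypothesis is genuinely needed is shown by the fibre-type case, where the corresponding bound on the length is only $-K_X \cdot C \le r+1$, already sharp for a $\PP^r$-bundle.

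All of this is exactly the content of Fujita's \cite[Lemma 2.5]{Fuj87} when $X$ is Gorenstein and $A$ is an ample Cartier divisor, and of Andreatta's \cite[Thm.2.1]{And95} in the general log-terminal case, so I would simply invoke these results. The main obstacle, and the reason the lemma is not purely formal, is carrying out the bend-and-break construction on the resolution with precise enough control of the discrepancies that a degree bound for rational curves measured by $-K_{\tilde X}$ on $\tilde X$ descends to the required bound for $-K_X \cdot C$ on $X$; once such a curve is in hand, only the numerical estimate above remains.
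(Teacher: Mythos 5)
Your proposal is correct and in the end does exactly what the paper does: the paper offers no independent proof of this lemma and simply attributes it to Fujita \cite[Lemma 2.5]{Fuj87} in the Gorenstein/ample case and to Andreatta \cite[Thm.2.1]{And95} in the log-terminal setting, which are precisely the references you invoke. Your preliminary reduction (a rational curve $C$ spanning $\Gamma$ with $-K_X\cdot C\le r$, combined with $A\cdot C\ge 1$ since $A$ is Cartier) is a correct account of why those length bounds yield the stated inequality.
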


In order to simplify the notation we assume without loss of generality that $X$ is smooth, so $X'=X$.
Note that by Proposition \ref{propositiondirectimage} the divisor $K_{X}+(n-2)A$ is pseudoeffective,
in particular $K_{X}+(n-1)A$ is big.

{\em Step 1. Reduction to the case where $K_X+(n-1)A$ is nef and big.}
Our goal is to prove that there exists a birational map $\merom{\psi}{X}{X_{min}}$
onto a projective variety $X_{min}$ with at most terminal singularities and a nef and big
Cartier divisor $A_{min}$ on $X_{min}$ such that $K_{X_{min}}+(n-1)A_{min}$ is nef and
\[
H^0(X_{min}, \sO_{X_{min}}(K_{X_{min}}+jA_{min})) \simeq H^0(X, \sO_{X}(K_{X}+jA)) \qquad \forall \ j \in \{1, \ldots, n-1 \}. 
\] 
We will construct $X_{min}$ by using an appropriate MMP:
since $A$ is nef and big, there exists an effective $\Q$-divisor $D$ on $X$ such that
$D \sim_\Q (n-1)A$ and the pair $(X, D)$ is klt. Since $K_X+D$ is pseudoeffective and $D$ is big
we know by \cite[Thm.1.2]{BCHM06} that
the pair $(X,D)$ has a log-minimal model $(X_{min}, D_{min})$, i.e. we can run a 
$K_X+D$-MMP with scaling
\[
(X_0, D_0):=(X,D) \stackrel{\mu_0}{\rightarrow} (X_1, D_1) \stackrel{\mu_1}{\rightarrow} \ldots
\stackrel{\mu_s}{\rightarrow} (X_s,D_s) =: (X_{min}, D_{min}).
\]
We claim that if 
\merom{\mu_i}{(X_i,D_i)}{(X_{i+1},D_{i+1})} is an elementary contraction contracting an extremal ray $\Gamma_i$ in this MMP, 
then $D_i \cdot \Gamma_i=0$. Moreover one has $D_{i+1} \sim_\Q (n-1) A_{i+1}$ with $A_{i+1}$ a nef and big Cartier divisor such that
$A_i=\mu_i^* A_{i+1}$ and
\[
H^0(X_i, \sO_{X_i}(K_{X_i}+jA_i)) \simeq H^0(X_{i+1}, \sO_{X_{i+1}}(K_{X_{i+1}}+jA_{i+1})) \qquad \forall \ j \in \{1, \ldots, n-1 \}.
\]
In particular the $K_X+D$-MMP is a $K_X$-MMP, so $X_{min}$ has terminal singularities. 
Hence if we set $A_{min}:=A_s$, then $K_{X_{min}}+(n-1)A_{min}$ is nef and our non-vanishing problem descends to
$X_{min}$.

{\em Proof of the claim.}
Since the contraction $\mu_i$ is $K_{X_i}+D_i$-negative, we have
\[
(K_{X_i}+(n-1)A_i) \cdot \Gamma_i = (K_{X_i}+D_i) \cdot \Gamma_i < 0.
\]
Since $\mu_i$ is birational Lemma \ref{lemmafujitabirational} shows that $A_i \cdot \Gamma_i=0$.

a) If the contraction is divisorial, then $\mu_i$ is a morphism and  $K_{X_i}=\mu_i^* K_{X_{i+1}}+E_i$ with $E_i$ an effective $\Q$-divisor.
Since $A_i \cdot \Gamma_i=0$ there exists a nef and big Cartier divisor $A_{i+1}$ on $X_{i+1}$ such that $A_i=\mu_i^* A_{i+1}$. 
Thus we have $\Delta_{i+1} \sim_\Q (n-1)A_{i+1}$
and
\[
H^0(X_i, \sO_{X_i}(K_{X_i}+jA_i)) \simeq H^0(X_{i+1}, \sO_{X_{i+1}}(K_{X_{i+1}}+jA_{i+1})) \qquad \forall \ j \in \{1, \ldots, n-1 \}.
\]

b) If the contraction is small, denote by $\holom{\nu}{X_i}{X'}$ and $\holom{\nu_+}{X_{i+1}}{X'}$ the birational morphisms defining the flip.
Since $A_i \cdot \Gamma_i=0$ there exists a nef and big Cartier divisor $A'$ on $X'$ such that $A_i=\nu^* A'$. 
Thus $A_{i+1}:=\nu^*_+ A'$ is a nef and big Cartier divisor such that $\Delta_{i+1} \sim_\Q (n-1) A_{i+1}$. 
Since $X_i$ and $X_{i+1}$ are isomorphic in codimension one, we have
\[
H^0(X_i, \sO_{X_i}(K_{X_i}+jA_i)) \simeq H^0(X_{i+1}, \sO_{X_{i+1}}(K_{X_{i+1}}+jA_{i+1})) \qquad \forall \ j \in \{1, \ldots, n-1 \}. 
\]

{\em Step 2. The computation.}
We know that $K_{X_{min}}+(n-2)A_{min}$ is pseudoeffective and by the first step $K_{X_{min}}+(n-1)A_{min}$ is nef and big.
The goal of this step is to show that
\[
A_{min}^{n-2} \cdot 
[
2 (K_{X_{min}}^2+c_2(X_{min}))
+
6n A_{min} \cdot K_{X_{min}}  
+
(n+1) (3n-2) A_{min}^2
]
\]
is positive. 
Note first that
\begin{eqnarray*}
& & A_{min}^{n-2} \cdot 
[
2 (K_{X_{min}}^2+c_2(X_{min}))
+
6n A_{min} \cdot K_{X_{min}}  
+
(n+1) (3n-2) A_{min}^2
] 
\\
& = &
2 A_{min}^{n-2} (K_{X_{min}}+(n-1)A_{min}) \cdot (K_{X_{min}}+(n+2)A_{min})
\\
& & \qquad + A_{min}^{n-2} \cdot
\left[
(2n-2) K_{X_{min}} \cdot A_{min}
+
(n^2-n+2) A_{min}^2
+
2 c_2(X_{min})
\right]. 
\end{eqnarray*}
Since $K_{X_{min}}+(n-1)A_{min}$ is nef and big, the first term is positive. Thus we are left to show that 
\[
A_{min}^{n-2} \cdot
\left[
(2n-2) K_{X_{min}} \cdot A_{min}
+
(n^2-n+2) A_{min}^2
+
2 c_2(X_{min})
\right]
\geq
0.
\]

{\em 1st case. $(X_{min},A_{min})$ is not birationally a scroll.}
Then $\Omega_{X_{min}}\hspace{-0.8ex}<\hspace{-0.8ex}A_{min}\hspace{-0.8ex}>$ is generically nef by Theorem \ref{theoremgenericnefCartier}. 
Since  $K_{X_{min}}+(n-1)A_{min}$ is nef, 
$\det \Omega_{X_{min}}\hspace{-0.8ex}<\hspace{-0.8ex}A_{min}\hspace{-0.8ex}>=K_{X_{min}}+nA_{min}$ is nef. 
Since $X_{min}$ is smooth in codimension two we know by Corollary \ref{corollarymiyaokaQ} that 
\[
A_{min}^{n-2}  \cdot c_2(X_{min}) \geq - A_{min}^{n-2} \cdot  
\left(
(n-1) K_{X_{min}} \cdot A_{min} + \frac{(n-1)n}{2} A_{min}^2
\right).
\]
Therefore
\begin{eqnarray*}
& & A_{min}^{n-2} \cdot
\left[
(2n-2) K_{X_{min}} \cdot A_{min}
+
(n^2-n+2) A_{min}^2
+
2 c_2(X_{min})
\right]
\\
& \geq & 
A_{min}^{n-2} \cdot
\left[
(n^2-n+2) A_{min}^2
-
(n-1)n A_{min}^2
\right]
= 2 A_{min}^n \geq 0.
\end{eqnarray*}

{\em 2nd case. $(X_{min},A_{min})$ is birationally a scroll.}

Since $A_{min}$ is a limit of ample $\Q$-Cartier $\Q$-divisors, the problem reduces 
to showing that if $S$
is a surface cut out by general divisors $D_j \in | m_j H_j|$ where the $H_j$ are ample Cartier divisors 
and $m_j \gg 0$, then one has
\[
[S] \cdot
\left[
(2n-2) K_{X_{min}} \cdot A_{min}
+
(n^2-n+2) A_{min}^2
+
2 c_2(X_{min})
\right]
\geq
0.
\]
Note that since $X_{min}$ is smooth in codimension two,
the surface $S$ is smooth. The main difficulty is to estimate $[S] \cdot c_2(X_{min})$ which we will do now.

Denote by $T_{X_{min}}:=\Omega_{X_{min}}^*$ the tangent sheaf of $X_{min}$.
Fix $H_1, \ldots, H_{n-1}$ ample Cartier divisors, 
and let 
\[
0= \sF_0 \subsetneq \sF_1 \subsetneq \ldots \subsetneq \sF_r=T_{X_{min}}
\]
be the Harder-Narasimhan filtration of $T_{X_{min}}$ with respect to $H_1, \ldots, H_{n-1}$. 
Then for $i \in \{ 1, \ldots, r\}$, the graded pieces $\sG_i:=\sF_i/\sF_{i-1}$ are semistable torsion-free sheaves
and if $\mu(\sG_i)$ denotes the slope, we have a strictly decreasing sequence
\[
\mu(\sG_1) > \mu(\sG_2) > \ldots > \mu(\sG_r).
\]
Set $d_i:=\rk \sG_i$, and let $l \in \N$ be as in Lemma \ref{lemmatechnical}, then 
\[
\mu(\sG_i\hspace{-0.8ex}<\hspace{-0.8ex}-A_{min}\hspace{-0.8ex}>) \leq 0 \qquad \forall \ i \geq l+1.
\]
Moreover by Lemma \ref{lemmatechnical},b) there exists 
a sequence of rational numbers $w_1, \ldots, w_l$ such that
\[
w_i \in [d_i, d_i+1] \qquad \forall \ i \in \{ 1, \ldots, l \}
\]
and
\begin{equation} \label{weightsum}
\sum_{i=1}^l w_i = (\sum_{i=1}^l d_i)+1
\end{equation}
and
\[
\mu(\sG_i\hspace{-0.8ex}<\hspace{-0.8ex}-\frac{w_i}{d_i}A_{min}\hspace{-0.8ex}>) \leq 0 \qquad \forall \ i \in \{ 1, \ldots, l \}.
\]
Note furthermore that $\Omega_{X_{min}}$ contains a generically nef subsheaf of rank at least three (the pull-back of the cotangent
sheaf of the base of the MRC-fibration). Thus there exists a $k \in \{ l+1, \ldots, r \}$ such that
\[
\mu(\sG_i) \leq 0 \qquad \forall \ i \geq k
\]
and $\sum_{i=k}^r d_i \geq 3$.
For $i \in \{ 1, \ldots, r\}$ we set  
\[
V_i := \sG^*_i|_S.
\]
Then the $V_j$ are locally free sheaves on the surface $S$.
Since $S$ is a smooth surface (so every ideal sheaf has a locally free resolution of length at most one), 
we have by \cite[Lemma 10.9]{Uta92}
\[
c_2(\sG_i|_S) \geq c_2((\sG_i|_S)^{**})=c_2((\sG_i|_S)^{*})  \qquad \forall \ i \in \{ 1, \ldots, r\}.
\]
Moreover we have $\sG^*_i|_S \simeq (\sG_i|_S)^*$, so we obtain
\[
[S] \cdot c_2(X_{min}) = c_2(T_{X_{min}}|_S)= c_2(\oplus_{i=1}^r \sG_i|_S)
\geq c_2(\oplus_{i=1}^r V_i).
\]

Our goal is to estimate $c_2(\oplus_{i=1}^r V_i)$ by applying Lemma \ref{lemmamiyaokaQ}
to a sufficiently positive $\Q$-twist. For $i \in \{ l+1, \ldots, k-1\}$ we set
\[
w_i := d_i
\]
and for $i \in \{ k, \ldots, r\}$ we set
\[
w_i := 0.
\]
With this notations the slope estimates imply that for all $i \in \{ 1, \ldots, r\}$ the twisted vector bundle
$V_i\hspace{-0.8ex}<\hspace{-0.8ex}\frac{w_i}{d_i}A_{min}\hspace{-0.8ex}>$
is generically nef\footnote{In order to simplify
the notation we denote by $A_{min}$ the restriction of $A_{min}$ to $S$.}.
By Formula \eqref{qc1} we have
\[
c_1(V_i\hspace{-0.8ex}<\hspace{-0.8ex}\frac{w_i}{d_i}A_{min}\hspace{-0.8ex}>)
=
c_1(V_i)+w_i A_{min} \qquad \forall \ i \in \{ 1, \ldots, r\}.
\]
Since $V_i\hspace{-0.8ex}<\hspace{-0.8ex}\frac{w_i}{d_i}A_{min}\hspace{-0.8ex}>$ is generically nef and $A_{min}$ is nef, 
this implies
\begin{equation} \label{equationc1estimate}
c_1(V_i) \cdot A_{min} \geq - w_i A_{min}^2 \qquad \forall \ i \in \{ 1, \ldots, r\}.
\end{equation}
Since $\sum_{i=1}^l w_i = (\sum_{i=1}^l d_i)+1$ and
$\sum_{i=l+1}^{k-1} w_i = \sum_{i=l+1}^{k-1} d_i$, we have
\[
\sum_{i=1}^{r} w_i = (\sum_{i=1}^{k-1} d_i)+1=n-(\sum_{i=k}^r d_i)+1 \leq n-2.
\]
Thus if we set
\[
c := \frac{n-1}{\sum_{i=1}^{r} w_i},
\]
we have $c \geq 1$ and $\sum_{i=1}^r c w_i=n-1$. Since $K_{X_{min}}|_S= \sum_{i=1}^r c_1(V_i)$ the twisted vector bundle
\[
\bigoplus_{i=1}^r V_i\hspace{-0.8ex}<\hspace{-0.8ex}\frac{c w_i}{d_i}A_{min}\hspace{-0.8ex}>
\]
is generically nef with nef determinant $(K_{X_{min}}+(n-1)A_{min})|_S$. 
Thus its second Chern class is non-negative by Lemma \ref{lemmamiyaokaQ}, so
by Lemma \ref{lemmac2formula} below we have
\begin{eqnarray*}
0 & \leq & c_2( \oplus_{i=1}^r V_i\hspace{-0.8ex}<\hspace{-0.8ex}\frac{c w_i}{d_i}A_{min}\hspace{-0.8ex}> )
\\
& = &
c_2( \oplus_{i=1}^r V_i )
+
\frac{1}{2} 
\left(
(\sum_{i=1}^r c w_i)^2-\sum_{i=1}^r \frac{(c w_i)^2}{d_i}
\right) A_{min}^2
+ 
\sum_{i=1}^r 
\left(
(\sum_{j=1}^r c w_j) - \frac{c w_i}{d_i}
\right)
c_1(V_i) \cdot A_{min}. 
\end{eqnarray*}
Since $\sum_{j=1}^r c w_j=n-1$ and $K_{X_{min}}|_S= \sum_{i=1}^r c_1(V_i)$ we have
\[
\sum_{i=1}^r 
\left(
(\sum_{j=1}^r c w_j) - \frac{c w_i}{d_i}
\right)
c_1(V_i) \cdot A_{min}
=
(n-1) K_{X_{min}}|_S \cdot A_{min} - \sum_{i=1}^r \frac{c w_i}{d_i} c_1(V_i) \cdot A_{min}.
\]
By inequality \eqref{equationc1estimate} this is less or equal than
\[
(n-1) K_{X_{min}}|_S \cdot A_{min}+\sum_{i=1}^r \frac{c w_i^2}{d_i} A_{min}^2.
\]
Thus we get
a lower bound for the second Chern class:
\begin{equation}
c_2(\oplus_{i=1}^r V_i)  \geq
- (n-1) K_{X_{min}}|_S \cdot A_{min} 
-
\frac{1}{2} 
\left[ 
(n-1)^2+(2c-c^2) \sum_{i=1}^r \frac{w_i^2}{d_i}
\right]
A_{min}^2.
\end{equation}
This immediately implies that
$[S] \cdot [(2n-2) K_{X_{min}} \cdot A_{min}
+
(n^2-n+2) A_{min}^2
+
2 c_2(X_{min}) ]$ is greater or equal than
\[
\left[ (n^2-n+2)
-(n-1)^2
-(2c-c^2) \sum_{i=1}^r \frac{w_i^2}{d_i}
\right] [S] \cdot A_{min}^2
=
\left[ n+1
-(2c-c^2) \sum_{i=1}^r \frac{w_i^2}{d_i}
\right] [S] \cdot A_{min}^2
.
\]
We have $2c-c^2 \leq 1$ so we are finished if we show that
\[
\sum_{i=1}^r \frac{w_i^2}{d_i} \leq n+1.
\]
Recall now that by Equation \eqref{weightsum} we have
$\sum_{i=1}^l (w_i-d_i) = 1$,
so 
\[
\sum_{i=1}^l \frac{w_i^2}{d_i}= \sum_{i=1}^l \frac{d_i^2+2(w_i-d_i)d_i+(w_i-d_i)^2}{d_i}
= \sum_{i=1}^l d_i + 2 + \sum_{i=1}^l \frac{(w_i-d_i)^2}{d_i}.
\]
Since $(w_i-d_i) \in [0, 1]$ and $d_i \geq 1$ we have moreover
\[
\sum_{i=1}^l \frac{(w_i-d_i)^2}{d_i} \leq \sum_{i=1}^l (w_i-d_i) = 1.
\]
Since $w_i=d_i$ for $i \in \{ l+1, \ldots, k-1\}$ and $w_i=0$ for $i \in \{ k, \ldots, r\}$ we get
\[
\sum_{i=1}^r \frac{w_i^2}{d_i} \leq \sum_{i=1}^{k-1} d_i +3=n-\sum_{i=k}^{r} d_i+3.
\]
Since $\sum_{i=k}^{r} d_i \geq 3$ this finishes the proof of this step.

{\em Step 3. The conclusion.}
Let \holom{\mu}{X_{min}'}{X_{min}} be a desingularisation of $X_{min}$. 
Since $X_{min}$ is smooth in codimension two one has
\[
(\mu^* A_{min})^{n-2} \cdot
\left[
2 (K_{X_{min}'}^2+c_2(X_{min}'))
+
6n \mu^* A_{min} \cdot K_{X_{min}'} 
+
(n+1) (3n-2) (\mu^*A_{min})^2 
\right]
\]
\[
=A_{min}^{n-2} \cdot
[
2 (K_{X_{min}}^2+c_2(X_{min}))
+
6n A_{min} \cdot K_{X_{min}}  
+
(n+1) (3n-2) A_{min}^2]
\]
which is positive by Step 2.
Since terminal singularities are rational, we have
\[
\chi(X_{min}',  \sO_{X_{min}'}(K_{X_{min}'}+j\mu^*A_{min})) = \chi(X_{min}, \sO_{X_{min}}(K_{X_{min}}+jA_{min}))=0
\]
for all $j=1, \ldots, n-1$. 
Thus by Lemma \ref{lemmaminusone} the Equation \eqref{equationtwo} holds, a contradiction to our computation.
\end{proof}

\begin{lemma} \label{lemmac2formula}
Let $S$ be a projective manifold. Let $V_1, \ldots, V_r$ be vector bundles on $S$, 
and let $A$ be a Cartier divisor class on $S$. Set $d_i:=\rk V_i$,
and let $\alpha_i \in \Q$ for $i \in \{1, \ldots, r\}$. Then $c_2( \oplus_{i=1}^r V_i\hspace{-0.8ex}<\hspace{-0.8ex}\frac{\alpha_i}{d_i}A\hspace{-0.8ex}> )$ is equal to 
\[
c_2( \oplus_{i=1}^r V_i )
+
\frac{1}{2} 
\left(
(\sum_{i=1}^r \alpha_i)^2-\sum_{i=1}^r \frac{\alpha_i^2}{d_i}
\right) A^2
+ 
\sum_{i=1}^r 
\left(
(\sum_{j=1}^r \alpha_j) - \frac{\alpha_i}{d_i}
\right) 
c_1(V_i) \cdot A.
\] 
\end{lemma}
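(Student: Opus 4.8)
The plan is a bare-hands computation in the Chow ring of $S$ with rational coefficients, using only the Whitney sum formula together with the twisting formulas \eqref{qc1} and \eqref{qc2}. Write $W_i := V_i\hspace{-0.8ex}<\hspace{-0.8ex}\frac{\alpha_i}{d_i}A\hspace{-0.8ex}>$. Applying \eqref{qc1} gives $c_1(W_i) = c_1(V_i) + \alpha_i A$, and applying \eqref{qc2} with twist $\delta = \frac{\alpha_i}{d_i}A$ and rank $d_i$ gives
\[
c_2(W_i) = c_2(V_i) + \frac{(d_i-1)\alpha_i}{d_i}\,c_1(V_i)\cdot A + \frac{(d_i-1)\alpha_i^2}{2d_i}\,A^2.
\]
Thus after these two substitutions everything is expressed in terms of the honest bundles $V_i$.

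Next I would expand $c_2(\oplus_{i=1}^r W_i) = \sum_i c_2(W_i) + \sum_{i<j} c_1(W_i)\cdot c_1(W_j)$, insert the expressions above, and sort the outcome by type. The terms involving neither $A$ nor $A^2$ recombine, again by Whitney applied to the $V_i$, into $c_2(\oplus_i V_i)$. For the part linear in $A$ I collect the coefficient of each $c_1(V_i)\cdot A$: from $c_2(W_i)$ it is $\frac{(d_i-1)\alpha_i}{d_i}$, and from the cross terms $\sum_{i<j}(\alpha_j c_1(V_i)+\alpha_i c_1(V_j))\cdot A = \sum_i(\sum_{j\ne i}\alpha_j)\,c_1(V_i)\cdot A$ it is $\sum_j\alpha_j - \alpha_i$; their sum simplifies to $\sum_j\alpha_j - \frac{\alpha_i}{d_i}$ using $\frac{(d_i-1)\alpha_i}{d_i}-\alpha_i = -\frac{\alpha_i}{d_i}$. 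Finally the coefficient of $A^2$ is $\sum_i\frac{(d_i-1)\alpha_i^2}{2d_i} + \sum_{i<j}\alpha_i\alpha_j$, which equals $\frac12\bigl((\sum_i\alpha_i)^2 - \sum_i\frac{\alpha_i^2}{d_i}\bigr)$ after inserting $2\sum_{i<j}\alpha_i\alpha_j = (\sum_i\alpha_i)^2 - \sum_i\alpha_i^2$. Assembling the three pieces gives exactly the claimed identity.

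There is no genuine obstacle here, as the statement is purely formal; the only points requiring a little care are the bookkeeping of the off-diagonal Chern-class products (rewriting $\sum_{i<j}(\alpha_j c_1(V_i)+\alpha_i c_1(V_j))$ as a single sum over $i$ with coefficient $\sum_{j\ne i}\alpha_j$) and the two elementary arithmetic identities $\frac{(d_i-1)\alpha_i}{d_i}-\alpha_i = -\frac{\alpha_i}{d_i}$ and $\sum_i\frac{(d_i-1)\alpha_i^2}{2d_i} + \sum_{i<j}\alpha_i\alpha_j = \frac12\bigl((\sum_i\alpha_i)^2-\sum_i\frac{\alpha_i^2}{d_i}\bigr)$, which are precisely what produce the final shape of the formula.
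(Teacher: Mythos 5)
Your computation is correct and follows essentially the same route as the paper: substitute Formulae \eqref{qc1} and \eqref{qc2}, expand $c_2$ of the direct sum via the Whitney formula, and collect the constant, $c_1(V_i)\cdot A$, and $A^2$ terms. The only cosmetic difference is that you regroup the off-diagonal sum $\sum_{i<j}(\alpha_j c_1(V_i)+\alpha_i c_1(V_j))$ directly, where the paper invokes a short induction on $r$ for the same bookkeeping.
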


\begin{proof}
Note first that by Formula \eqref{qc2} one has
\begin{equation} \label{equationb}
c_2(V_i\hspace{-0.8ex}<\hspace{-0.8ex}\frac{\alpha_i}{d_i}A\hspace{-0.8ex}>)
=
c_2(V_i)+\frac{1}{2} (\alpha_i^2-\frac{\alpha_i^2}{d_i}) A^2+(\alpha_i-\frac{\alpha_i}{d_i}) c_1(V_i) \cdot A.
\end{equation}
Recall also that for a direct sum of ($\Q$-twisted) vector bundles  $\oplus_{i=1}^r \sF_i$ one has
\[
c_2(\oplus_{i=1}^r \sF_i) = \sum_{i=1}^r c_2(\sF_i) + \sum_{i<j} c_1(\sF_i) \cdot c_1(\sF_j).
\]
Thus 
$$
c_2( \oplus_{i=1}^r V_i\hspace{-0.8ex}<\hspace{-0.8ex}\frac{\alpha_i}{d_i}A\hspace{-0.8ex}> )
=
\sum_{i=1}^r c_2(V_i\hspace{-0.8ex}<\hspace{-0.8ex}\frac{\alpha_i}{d_i}A\hspace{-0.8ex}>) 
+ \sum_{i<j} c_1(V_i\hspace{-0.8ex}<\hspace{-0.8ex}\frac{\alpha_i}{d_i}A\hspace{-0.8ex}>) 
\cdot
c_1(V_j\hspace{-0.8ex}<\hspace{-0.8ex}\frac{\alpha_j}{d_j}A\hspace{-0.8ex}>)
$$
which by \eqref{equationb} and Formula \eqref{qc1} is equal to
\[
\sum_{i=1}^r 
\left(
c_2(V_i)+\frac{1}{2} (\alpha_i^2-\frac{\alpha_i^2}{d_i}) A^2+(\alpha_i-\frac{\alpha_i}{d_i}) c_1(V_i) A
\right)
+ \sum_{i<j} (c_1(V_i)+\alpha_i A) \cdot (c_1(V_j)+\alpha_j A)
\]
\[
= 
c_2(\oplus_{i=1}^r V_i) + \frac{1}{2} 
\left(
\sum_{i=1}^r \alpha_i^2 + 2 \sum_{i<j} \alpha_i \alpha_j - \sum_{i=1}^r \frac{\alpha_i^2}{d_i}
\right) A^2
+ \sum_{i=1}^r (\alpha_i-\frac{\alpha_i}{d_i}) c_1(V_i) \cdot A + \sum_{i<j} (\alpha_j c_1(V_i) \cdot A+\alpha_i c_1(V_j) \cdot A).
\]
By the binomial formula the coefficient for $A^2$ equals $(\sum_{i=1}^r \alpha_i)^2-\sum_{i=1}^r \frac{\alpha_i^2}{d_i}$,
so we are left to show that
\[
\sum_{i=1}^r \alpha_i c_1(V_i) \cdot A + \sum_{i<j} (\alpha_j c_1(V_i) \cdot A+\alpha_i c_1(V_j) \cdot A)
=
\sum_{i=1}^r 
(\sum_{j=1}^r \alpha_j)  
c_1(V_i) \cdot A.
\]
This is elementary by induction on $r$.
\end{proof}

\section{The Ambro-Ionescu-Kawamata conjecture} 

\begin{proof}[Proof of Theorem \ref{theoremIK}]
By \cite[Thm.3.1]{Kaw00} we can suppose that $K_X+A$ is nef and big.

{\em Step 1. Terminalisation.}
By \cite[Thm.6.23]{KM98} there exists a terminalisation of $X$, i.e
a birational map \holom{\mu}{X'}{X} from a threefold with at most terminal singularities such that $K_{X'}=\mu^*K_X$.
Thus $A':=\mu^* A$ is a nef and big Cartier divisor such that $K_{X'}+A'$ is nef and big, moreover we have
\[
H^0(X, \sO_X(K_X+A)) = H^0(X', \sO_{X'}(K_{X'}+A')).
\]
Hence the non-vanishing problem lifts to $X'$, in 
order to simplify the notation
we suppose without loss of generality that $X$ has at most terminal singularities. 

{\em Step 2. The computation.}
We claim that the twisted cotangent sheaf $\Omega_X\hspace{-0.8ex}<\hspace{-0.8ex}\frac{2}{3}A\hspace{-0.8ex}>$ is generically 
nef. 
Assuming this for the time being, let us show how to conclude. By \cite[p.541]{Kaw86}, we have
\[
\chi(X, \sO_X) \geq \frac{-1}{24} K_X \cdot c_2(X).
\]
By the Riemann-Roch formula for threefolds with terminal singularities \cite[p.413]{Rei87}
\[
\chi(X, \sO_X(K_X+A)) \geq 
\frac{1}{12} (K_X+A)\cdot A \cdot (K_X+2A) +  \frac{1}{24} (K_X+2A) \cdot c_2(X). 
\]
Since $\Omega_X\hspace{-0.8ex}<\hspace{-0.8ex}\frac{2}{3}A\hspace{-0.8ex}>$ is generically nef and $K_X+2A$ is nef, we have
by Corollary \ref{corollarymiyaokaQ}
\[
(K_X+2A) \cdot c_2(X) \geq - (K_X+2A) \cdot (\frac{4}{3} K_X \cdot A + \frac{4}{3} A^2) = \frac{4}{3} (K_X+2A) \cdot (K_X+A) \cdot A.
\]
Hence
\[
\frac{1}{12} (K_X+A) \cdot  A \cdot (K_X+2A) +  \frac{1}{24} (K_X+2A) \cdot c_2(X)
\geq
\frac{1}{24} (K_X+2A) \cdot  (K_X+A) \cdot
\frac{2}{3} A.
\]
Since the three divisors $K_X+A, A$ and $K_X+2A$ are nef and big this intersection product is strictly positive.
Thus by Kawamata-Viehweg vanishing
\[
h^0(X, \sO_X(K_X+A)) = \chi(X, \sO_X(K_X+A)) > 0.
\]

{\em Proof of the claim.}
We argue by contradiction. Then by Theorem \ref{theoremgenericnef}
there exists a birational morphism \holom{\mu}{X'}{X} and a fibration \holom{\varphi}{X'}{Y} 
such that the general fibre $F$ satisfies
\begin{equation} \label{equationvanishing}
H^0(F, \sO_F(D)) = 0
\end{equation}
where $D$ is a Cartier divisor on $F$ such that $D \sim_\Q K_F+ \frac{2j}{3} \mu^*A$ with $j \in [0, 3-\dim Y]~\cap~\Q$.
Since $X$ is terminal, we have
\[
K_{X'}= \mu^*K_{X}+E
\]
for some effective $\Q$-divisor $E$. Since $K_X+A$ is nef, this implies that $K_{X'}+\mu^* A$ is pseudoeffective.

{\em 1st case. $\dim Y=1$.} Since $K_{X'}+\mu^*A$ is pseudoeffective, the 
restriction to a general fibre $K_F+\mu^*A|_F$ is pseudoeffective.
Moreover by Equation \ref{equationvanishing} one has
\[
H^0(F, \sO_F(K_F+ \mu^* A))=0.
\]
This contradicts Theorem \ref{theoremBS}. 

{\em 2nd case. $\dim Y=2$.}
Let $F \simeq \PP^1$ be a general $\varphi$-fibre, then
$$
(K_X+A) \cdot \mu(F) > 0
$$
since $K_X+A$ is big. 
Thus 
\[
(K_{X'}+\mu^* A) \cdot F = (\mu^*(K_{X}+A)+E) \cdot F > 0.
\]
Since $K_{X'} \cdot F=-2$ and $A$ is Cartier, this implies that $\mu^* A \cdot F \geq 3$. 
Hence $K_F+ \frac{2}{3} A|_F$ is $\Q$-linear equivalent to an effective divisor, 
a contradiction to \eqref{equationvanishing}.
\end{proof}

\end{document}